\tikzstyle{v} = [circle, draw, inner sep=2pt, minimum size=3pt, fill=black]
\let\c@table\c@figure
\theoremstyle{plain}
\newtheorem{theorem}{Theorem}[section]
\newtheorem{lemma}[theorem]{Lemma}
\newtheorem{proposition}[theorem]{Proposition}
\newtheorem{corollary}[theorem]{Corollary}
\theoremstyle{definition}
\newtheorem{definition}[theorem]{Definition}
\newtheorem{example}[theorem]{Example}
\newtheorem{remark}[theorem]{Remark}
\DeclareMathOperator{\UBG}{UBG}
\title {Unit Ball Graphs on Geodesic Spaces}
\author{
Masamichi Kuroda \and
Shuhei Tsujie\thanks{Corresponding author}}
\date{}
\begin{document}
\maketitle
	
\begin{abstract}
Consider finitely many points in a geodesic space. 
If the distance of two points is less than a fixed threshold, then we regard these two points as ``near". 
Connecting near points with edges, we obtain a simple graph on the points, which is called a unit ball graph. 
If the space is the real line, then it is known as a unit interval graph. 
Unit ball graphs on a geodesic space describe geometric characteristics of the space in terms of graphs. 
In this article, we show that every unit ball graph on a geodesic space is (strongly) chordal if and only if the space is an $ \mathbb{R} $-tree and that every unit ball graph on a geodesic space is (claw, net)-free if and only if the space is a connected manifold of dimension at most $ 1 $. 
As a corollary, we prove that the collection of unit ball graphs essentially characterizes the real line and the unit circle. 
\end{abstract}

{\footnotesize \textit{Keywords}: 
geodesic space, 
$ \mathbb{R} $-tree, 
real tree, 
$ 0 $-hyperbolic space,
unit ball graph, 
unit interval graph, 
intersection graph, 
chordal graph, 
strongly chordal graph, 
$ (\text{claw}, \text{net}) $-free graph, 
Hamiltonian-hereditary graph
}

{\footnotesize \textit{2020 MSC}: 
51D20, 
05C62, 
05C75 
}

\section{Introduction}
Let $ (X, d) $ be a metric space. 
Consider finitely many points $ x_{1}, \dots, x_{n} $ in $ X $ and fix a threshold $ \delta > 0 $. 
If $ d(x_{i}, x_{j}) \leq \delta $, we regard $ x_{i} $ and $ x_{j} $ as ``near". 
We can construct a simple graph on the set $ \{x_{1}, \dots, x_{n}\} $ with edges between near points. 
It might be expected that we could obtain some information about $ X $ from graphs constructed in such a way. 
However, it seems difficult to study metric spaces with unit ball graphs without any other assumptions. 
For example, let $ (X,d) $ be a metric space defined by 
\begin{align*}
X \coloneqq \Set{(\cos \theta, \sin \theta) \in \mathbb{R}^{2} | \dfrac{\pi}{6} \leq \theta \leq \dfrac{11\pi}{6}}
\end{align*}
and $ d $ is the restriction of the Euclidean metric in $ \mathbb{R}^{2} $. 
Take points $ x = (\cos\frac{\pi}{6}, \sin\frac{\pi}{6}), y = (\cos\frac{11\pi}{6}, \sin\frac{11\pi}{6}) $, and $ z = (-1,0) $ and suppose that $ \delta = 1 $ (see Figure \ref{Fig:C}). 
\begin{figure}[t]
\centering
\begin{tikzpicture}
\draw (.865,.5) arc (30:330:1);
\draw (.865, .5) node[right](x){$ x $}; 
\draw (.865,-.5) node[right](y){$ y $}; 
\draw (-1,0) node[left](z){$ z $};
\end{tikzpicture}
\caption{Intuitively $ x $ and $ y $ should be furthest but not with the Euclidean metric} \label{Fig:C}
\end{figure}
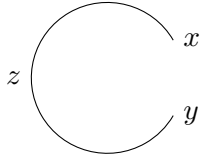
Then $ x $ is ``near" to $ y $ but not to $ z $, which seems counterintuitive. 
It is natural to regard $ x $ and $ y $ as the ``furthest" points in $ X $ and $ z $ is the midpoint between $ x $ and $ y $. 
If we define a metric on $ X $ by arc length, then it fits our intuition. 
Thus, from now on, our interest focus on geodesic spaces defined as follows. 
\begin{definition}
Let $ (X,d) $ be a metric space and $ x,y \in X $. 
A \textbf{geodesic} from $ x $ to $ y $ is a distance-preserving map $ \gamma $ from a closed interval $ [0, d(x,y)] \subseteq \mathbb{R} $ to $ X $ with $ \gamma(0)=x $ and $ \gamma(d(x,y)) = y $. 
Its image is said to be a \textbf{geodesic segment} with endpoints $ x $ and $ y $. 
We say that $ (X,d) $ is a \textbf{geodesic space} if every two points are joined by a geodesic. 
Note that a geodesic segment between two points is not necessarily unique. 
We will write a geodesic segment whose endpoints are $ x $ and $ y $ as $ [x,y] $. 
If there exists a unique geodesic segment for every pair of points, then we say that $ (X,d) $ is \textbf{uniquely geodesic}.   
\end{definition}

\begin{example}
The $ n $-dimensional Euclidean space $ \mathbb{R}^{n} $ and its convex subsets are geodesic spaces. 
The $ n $-dimensional sphere $ S^{n} $ with the great-circle metric is a geodesic space. 
The vector space $ \mathbb{R}^{n} $ with $ L_{p} $-norm $ || \ast ||_{p} $ ($ 1 \leq p \leq \infty $) is also a geodesic space. 
\end{example}

Next we define the graphs in which we are interested. 
\begin{definition}
Let $ (X, d) $ be a (geodesic) metric space. 
A simple graph $ G = (V_{G}, E_{G}) $ is said to be a \textbf{unit ball graph} on $ (X,d) $ if there exist a \textbf{threshold} $ \delta > 0 $ and a map $ \rho $, called a \textbf{realization}, from the vertex set $ V_{G} $ to $ X $ such that $ \{u,v\} \in E_{G}$ if and only if  $ d(\rho(u), \rho(v)) \leq \delta $. 
Let $ \UBG(X, d) $ denote the collection of the unit ball graphs on $ (X, d) $. 
When there is no confusion with the metric, we may write it as $ \UBG(X) $. 
\end{definition}

\begin{remark}
In this article, the term ``graph" refers an undirected simple graph on finite vertices.  
We frequently identify the vertices of a unit ball graph with the realized points in the space. 
\end{remark}

\begin{remark}
A unit ball graph is the intersection graph of finitely many closed balls of the same size in a geodesic space. 
If we scale the metric, then the graph can be the intersection graph of unit balls, that is, balls of radius $ 1 $. 
When we consider the Euclidean spaces, we may always assume that a unit ball graph is the intersection graph of finitely many unit balls. 
\end{remark}

Let $ H $ be a graph. 
A graph is said to be \textbf{$ H $-free} if it has no induced subgraph isomorphic to $ H $. 
A graph is called \textbf{chordal} if it is $ C_{n} $-free for all $ n \geq 4 $, where $ C_{n} $ denotes the cycle graph on $ n $ vertices. 
Note that a graph $ G $ is chordal if and only if every cycle of $ G $ of length four or more has a \textbf{chord}, which is an edge connecting non-consecutive vertices of the cycle. 
For an integer $ n \geq 3 $, the (complete) \textbf{$ n $-sun} is a graph on $ 2n $ vertices $ \Set{v_{i} | i \in \mathbb{Z}/2n\mathbb{Z}} $ such that the even-indexed vertices induce a complete graph, the odd-indexed vertices form an independent set, and an odd-indexed vertex $ v_{i} $ is adjacent to an even-indexed vertex $ v_{j} $ if and only if $ i - j =\pm 1 $ (See Figure \ref{Fig:7-sun} for example). 
A graph is called \textbf{sun-free} if it is $ n $-sun-free for all $ n \geq 3 $. 
A graph is called \textbf{strongly chordal} if it is chordal and sun-free. 
Farber \cite{farber1983characterizations-dm} investigated strongly chordal graphs and gave some characterizations. 
The definition above is one of such characterizations. 

\begin{figure}[t]
\centering
\begin{tikzpicture}
\foreach \a in {1,3,5,7,9,11,13}{
\draw (\a*360/14-360/14+90: 17.3mm) node[v](\a){};
}
\foreach \a in {2,4,6,8,10,12,14}{
\draw (\a*360/14-360/14+90: 10mm) node[v](\a){};
}
\foreach \u / \v in{1/2,2/3,3/4,4/5,5/6,6/7,7/8,8/9,9/10,10/11,11/12,12/13,13/14,14/1,2/4,2/6,2/8,2/10,2/12,2/14,4/6,4/8,4/10,4/12,4/14,6/8,6/10,6/12,6/14,8/10,8/12,8/14,10/12,10/14,12/14}{
\draw (\u) -- (\v);
}
\end{tikzpicture}
\caption{$ 7 $-sun} \label{Fig:7-sun}
\end{figure}

A graph in $ \UBG(\mathbb{R}) $ is called a \textbf{unit interval graph} also known as an \textbf{indifference graph}, which is a very important object in combinatorics. 
There are several linear-time algorithms for recognizing unit interval graphs. 
Also, they are characterized by forbidden induced subgraphs as explained below. 

\begin{theorem}[Wegner \cite{wegner1967eigenschaften}, Roberts \cite{roberts1969indifference-ptigt}]\label{WR unit interval}
A graph is unit interval if and only if it is chordal and $ (\text{claw}, \text{net}, 3\text{-sun}) $-free (see Figure \ref{Fig:graphs}). 
\end{theorem}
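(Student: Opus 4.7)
The statement is a classical characterization, so my plan is to prove the two implications separately, with the forward direction handled by geometric case analysis and the backward direction reduced to two classical theorems.

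For $(\Rightarrow)$, let $G$ be a unit interval graph with realization $\rho \colon V_G \to \mathbb{R}$ and threshold $\delta$. Chordality holds because interval graphs are chordal: ordering the realized intervals along $\mathbb{R}$ forces any cycle of length at least four to admit a chord among consecutive vertices. To rule out the claw, I would consider a vertex $v$ with three pairwise non-adjacent neighbors $u_1, u_2, u_3$; relabeling so that $\rho(u_1) \leq \rho(u_2) \leq \rho(u_3)$, pairwise non-adjacency forces $\rho(u_3) - \rho(u_1) > 2\delta$, whereas $|\rho(u_i) - \rho(v)| \leq \delta$ forces $\rho(u_3) - \rho(u_1) \leq 2\delta$, a contradiction. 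The net and the $3$-sun are excluded by analogous but longer case analyses on the linear order of the six realized points: roughly, the three pairwise adjacent ``inner'' vertices must lie within a window of width $\delta$, while the three ``outer'' vertices attached to them are then forced too close together to remain pairwise non-adjacent.

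For $(\Leftarrow)$, I would invoke two classical theorems: \emph{Lekkerkerker-Boland}, stating that a graph is an interval graph if and only if it is chordal and contains no asteroidal triple, and \emph{Roberts}, stating that a graph is a unit interval graph if and only if it is an interval graph and claw-free. Since the hypothesis already supplies chordality and claw-freeness, it suffices to show that a chordal, $(\text{net}, 3\text{-sun})$-free graph has no asteroidal triple; equivalently, given an asteroidal triple $\{a,b,c\}$ in a chordal graph, I would produce an induced claw, net, or $3$-sun. Fixing witnessing paths $P_{ab}, P_{bc}, P_{ca}$ avoiding the closed neighborhoods of $c, a, b$ respectively, and passing to a minimal such configuration, I would exploit the clique-tree structure of the chordal graph and the behavior of the minimal $ab$-, $bc$-, $ca$-separators to extract a distinguished six-vertex induced subgraph. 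The claim is that ``transversal'' separators produce a $3$-sun, ``touching'' separators produce a net, and an excess of edges at $a, b, c$ produces a claw.

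The main obstacle is this trichotomy: making the case analysis on separator intersections precise and exhaustive is the technical heart of Wegner-Roberts, and it requires careful bookkeeping on how the three witnessing paths interact with cliques along the clique tree. Once AT-freeness is established, Lekkerkerker-Boland yields an interval representation, and then Roberts upgrades it to a unit interval representation, closing the argument.
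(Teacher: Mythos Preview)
The paper does not give a proof of this theorem: it is quoted in the introduction as a classical result of Wegner and Roberts, with citations, and is used only as background motivation. So there is no ``paper's own proof'' to compare against.

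As for your proposal on its own merits: the forward direction is fine and the sketches for the claw, net, and $3$-sun obstructions are standard and easy to complete. In the backward direction your reduction to Lekkerkerker--Boland plus Roberts is sound in principle, but the actual content --- that a chordal graph with an asteroidal triple must contain an induced claw, net, or $3$-sun --- is precisely the step you leave as a ``trichotomy'' without argument. Note that the Lekkerkerker--Boland list of minimal chordal non-interval graphs is infinite, so this step genuinely requires showing that every member of those infinite families contains an induced claw (the claw-free ones being exactly the net and the $3$-sun). Your clique-tree/separator language is plausible but, as written, it is an outline of where the difficulty lies rather than a proof; you would need to make the case analysis explicit to close the gap.
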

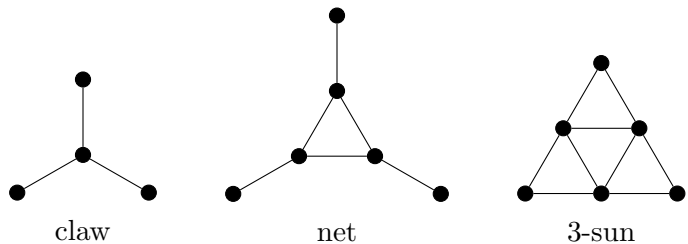
\begin{figure}[t]
\centering
\begin{tikzpicture}
\draw (0,0) node[v](1){};
\draw (0,1) node[v](2){};
\draw (-0.865,-0.5) node[v](3){};
\draw ( 0.865,-0.5) node[v](4){};
\draw (2)--(1)--(3);
\draw (1)--(4);
\draw (0,-1) node(){claw};
\end{tikzpicture}
\qquad
\begin{tikzpicture}
\draw (0,0) node[v](1){};
\draw (1,0) node[v](2){};
\draw (0.5,0.865) node[v](3){};
\draw (-0.865,-0.5) node[v](4){};
\draw ( 1.865,-0.5) node[v](5){};
\draw (0.5,1.865) node[v](6){};
\draw (4)--(1)--(3)--(6);
\draw (1)--(2)--(3);
\draw (2)--(5);
\draw (0.5,-1) node(){net};
\end{tikzpicture}
\qquad
\begin{tikzpicture}
\draw (0,0) node[v](1){};
\draw (1,0) node[v](2){};
\draw (2,0) node[v](3){};
\draw (0.5,0.865) node[v](4){};
\draw (1.5,0.865) node[v](5){};
\draw (1,1.73) node[v](6){};
\draw (6)--(4)--(1)--(2)--(3)--(5)--(6);
\draw (4)--(2)--(5)--(4);
\draw (1,-0.5) node(){$ 3 $-sun};
\end{tikzpicture}
\caption{Obstructions for unit interval graphs} \label{Fig:graphs}
\end{figure}

A graph in $ \UBG(S^{1}) $ is called a \textbf{unit circular-arc graph}, which is one of a natural generalization of unit interval graphs and this class is also well investigated (see \cite{lin2009characterizations-dm}, for example). 
Tucker \cite[Theorem 3.1 and Theorem 4.3]{tucker1974structure-dm} characterized unit circular-arc graphs in terms of forbidden induced subgraphs. 

A graph in $ \UBG(\mathbb{R}^{2}) $ is called a \textbf{unit disk graph}.
Breu and Kirkpatrick \cite{breu1998unit-cg} showed that the recognition of unit disk graphs is NP-hard. 
Therefore it seems very difficult to characterize unit disk graphs in terms of forbidden induced subgraphs. 
Recently, Atminas and Zamaraev \cite{atminas2018forbidden-dcg} discovered infinitely many minimal non-unit disk graphs. 

A graph in $ \UBG(\mathbb{R}^{2}, ||\ast||_{\infty}) = \UBG(\mathbb{R}^{2}, ||\ast||_{1}) $ is called a \textbf{unit square graph}. 
Breu \cite[Corollary 3.46.2]{breu1996algorithmic} proved that the recognition of unit square graphs is also NP-hard. 
Neuen \cite{neuen2016graph-aesoa} proved that the graph isomorphism problem for unit square graphs can be solved in polynomial time and investigated a lot of properties of unit square graphs. 
For instance, Neuen showed that every unit square graph is $ (K_{1,5}, K_{2,3}, \overline{3K_{2}}) $-free (see \cite[Lemma 3.3]{neuen2016graph-aesoa}).

From the above we obtain Table \ref{Tab:distinguishing table}. 
In particular, the collections of unit ball graphs of these four geodesic spaces are different. 
It is expected that the collection of unit ball graphs expresses some geometric properties of a geodesic space. 
\begin{table}[t]
\centering
\begin{tabular}{|c|c||c|c|c|c|c|}
\hline
class & space & $ C_{4} $ & claw & net & $ 3 $-sun & $ K_{1,5} $ \\
\hline 
unit interval \rule{0pt}{15pt} & $ \mathbb{R} $ & \XSolidBrush & \XSolidBrush & \XSolidBrush & \XSolidBrush & \XSolidBrush \\
unit circular-arc & $ S^{1} $ & \Checkmark & \XSolidBrush & \XSolidBrush & \XSolidBrush & \XSolidBrush \\
unit disk & $ \mathbb{R}^{2} $ & \Checkmark & \Checkmark & \Checkmark & \Checkmark & \Checkmark \\
unit square & $ (\mathbb{R}^{2}, ||\ast||_{\infty}) $ & \Checkmark & \Checkmark & \Checkmark & \Checkmark & \XSolidBrush \\
\hline
\end{tabular}
\caption{Geodesic spaces and their unit ball graphs} 
\label{Tab:distinguishing table}
\end{table}

Two metric spaces $ X $ and $ Y $ are said to be \textbf{similar} if there exists a bijection $ f \colon X \to Y $ and $ r > 0 $ such that $ d_{Y}(f(a), f(b)) = rd_{X}(a,b) $ for any $ a, b \in X $. 
When $ r = 1 $ we say that $ X $ and $ Y $ are \textbf{isometric}. 
If $ X $ and $ Y $ are similar, then clearly $ \UBG(X) = \UBG(Y) $. 

However, we can find easily non-similar geodesic spaces whose unit ball graphs coincide. 
For example, $ \UBG([0,1]) = \UBG(\mathbb{R}) $. 
More generally, when $ X $ is a convex subset of $ \mathbb{R}^{n} $ with non-empty interior, we have that $ \UBG(X) = \UBG(\mathbb{R}^{n}) $. 
In Section \ref{Sec:basic properties}, we will give a sufficient condition for the coincidence of the collections of unit ball graphs (Corollary \ref{sufficient condition for coincidence}).

The main contribution of this article is to characterize geodesic spaces whose unit ball graphs are chordal and geodesic spaces whose unit ball graphs are $ (\text{claw}, \text{net}) $-free. 
In order to state the results, we define $ \mathbb{R} $-trees and tripods. 
There are several equivalent definitions for $ \mathbb{R} $-trees. 
Here we give one of them (see Section \ref{Sec:R-trees}, for other conditions and details). 

\begin{definition}
A geodesic space is said to be an \textbf{$ \mathbb{R} $-tree} (or a \textbf{real tree}) if it is uniquely arc-connected, that is, every pair of points in it is joined by a unique arc. 
\end{definition}
\begin{definition}
A subset $ Y $ of a geodesic space $ X $ is said to be a \textbf{tripod} (see Figure \ref{Fig:tripod}) if there exist four distinct points $ x_{1}, x_{2}, x_{3}, y \in Y $ and geodesic segments $ [x_{i}, y]  \ (i=1,2,3) $ such that 
\begin{align*}
Y = [x_{1}, y] \cup  [x_{2}, y] \cup  [x_{3}, y] 
\text{ and } 
[x_{i}, y] \cap [x_{j}, y] = \{y\} \text{ if } i \neq j. 
\end{align*}
\end{definition}
\begin{figure}[t]
\centering
\begin{tikzpicture}
\draw (0,0) coordinate(1) node[above]{$ y $};
\draw (1,0.6) coordinate(2) node[right]{$ x_{1} $};
\draw (-1,0.3) coordinate(3) node[left]{$ x_{2} $};
\draw ( 0.2,-0.6) coordinate(4) node[below]{$ x_{3} $};
\draw (2)--(1)--(3);
\draw (1)--(4);
\end{tikzpicture}
\caption{A tripod} \label{Fig:tripod}
\end{figure}
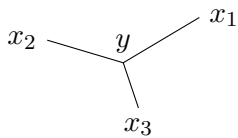

The main results are as follows. 
\begin{theorem}\label{main R-tree}
Let $ (X,d) $ be a geodesic space. 
Then the following are equivalent: 
\begin{enumerate}[(1)]
\item\label{main R-tree 1} Every unit ball graph on $ X $ is strongly chordal. 
\item\label{main R-tree 2} Every unit ball graph on $ X $ is chordal. 
\item\label{main R-tree 3} $ X $ is an $ \mathbb{R} $-tree. 
\end{enumerate}
\end{theorem}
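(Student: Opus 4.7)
The implication $(\ref{main R-tree 1}) \Rightarrow (\ref{main R-tree 2})$ is immediate from the definitions, so my plan is to prove $(\ref{main R-tree 3}) \Rightarrow (\ref{main R-tree 1})$ and $(\ref{main R-tree 2}) \Rightarrow (\ref{main R-tree 3})$.

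For $(\ref{main R-tree 3}) \Rightarrow (\ref{main R-tree 1})$, fix an $\mathbb{R}$-tree $X$ and a UBG $G \in \UBG(X)$ with realization $\rho$ and threshold $\delta$. Two structural facts drive the argument: closed balls in an $\mathbb{R}$-tree are convex (and are themselves subtrees), and pairwise intersecting convex subsets of an $\mathbb{R}$-tree share a common point (the Helly property). For chordality, I realize $G$ as the intersection graph of the closed balls $\overline{B}(\rho(v), \delta/2)$; these are subtrees of the finite subtree of $X$ spanned by them, and after subdividing at finitely many branch points we may regard the latter as a graph-theoretic tree, whence Gavril's theorem on intersection graphs of subtrees of a tree yields chordality. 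For sun-freeness, suppose by contradiction that an $n$-sun appears with inner clique $\{w_0, w_2, \dots, w_{2n-2}\}$. Helly applied to the balls around the inner clique produces a common point $p$ with $d(w_{2i}, p) \leq \delta/2$. Each outer vertex $w_{2i+1}$ lies in some branch-direction at $p$, and the identities $d(q_1, q_2) = d(q_1, p) + d(q_2, p)$ (distinct branches) and $d(q_1, q_2) = |d(q_1, p) - d(q_2, p)|$ (same branch) valid in an $\mathbb{R}$-tree translate the sun's adjacency and non-adjacency constraints into strict orderings of the radii $d(w_{2i}, p)$ that cannot be satisfied simultaneously. The $3$-sun serves as a model: my argument there yields $d(w_2, p) < d(w_4, p)$ and $d(w_4, p) < d(w_2, p)$ at once, a contradiction.

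For $(\ref{main R-tree 2}) \Rightarrow (\ref{main R-tree 3})$ I argue the contrapositive. Suppose $X$ is geodesic but not an $\mathbb{R}$-tree, so $X$ contains two distinct arcs between some pair of points. By extracting an innermost loop, I obtain a Jordan curve $J \subseteq X$ whose two sub-arcs meet only at their endpoints. When $X$ is not uniquely geodesic, these sub-arcs may be chosen to be geodesic segments; otherwise, $X$ admits a non-tripodal geodesic triangle and I build the cycle from its three sides. In either variant, I parameterize the loop by arc length and place a large number of equally spaced points, then choose a threshold $\delta$ slightly above the common consecutive spacing. I verify that each pair of consecutive points is an edge (by the geodesic property along each sub-arc) and that each pair of non-consecutive points is a non-edge (using the geodesic property on the same sub-arc and, for cross-arc pairs, routing through the shared endpoints of $J$ with total arc-distance strictly exceeding $\delta$). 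The outcome is an induced $C_n$ in a UBG on $X$, violating chordality.

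The main obstacle is the sun-freeness step in $(\ref{main R-tree 3}) \Rightarrow (\ref{main R-tree 1})$: the case analysis at the Helly point $p$ must cover every distribution of inner and outer vertices among the branches at $p$ and extract contradictory inequalities in each configuration. A secondary difficulty is the controlled cycle construction in $(\ref{main R-tree 2}) \Rightarrow (\ref{main R-tree 3})$, where one must ensure simultaneously that consecutive cycle-distances stay below $\delta$ and that all non-consecutive (especially cross-arc) distances strictly exceed $\delta$; balancing the sub-arcs' subdivisions so all consecutive distances are equal, and choosing $J$ as an innermost loop, should suffice.
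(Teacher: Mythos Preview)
Your plan for $(\ref{main R-tree 3}) \Rightarrow (\ref{main R-tree 1})$ via Gavril's theorem and the Helly property is a legitimate alternative to the paper's route (which instead proves a ``recombination'' inequality and exploits the bridge between disjoint closed subtrees), and the chordality half goes through. However, your sun-freeness sketch rests on a false identity: for two points $q_1,q_2$ lying in the \emph{same} component of $X\setminus\{p\}$ one does \emph{not} in general have $d(q_1,q_2)=|d(q_1,p)-d(q_2,p)|$; that holds only when $q_1,q_2,p$ lie on a common geodesic. Hence the case analysis at the Helly point cannot be reduced to comparing the radii $d(w_{2i},p)$ as you claim, and the promised contradiction $d(w_2,p)<d(w_4,p)<d(w_2,p)$ is not established. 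This is probably repairable (for instance by taking $p$ to be the median of three inner vertices and tracking where each outer vertex projects), but as written it is a gap.

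The more serious problem is in $(\ref{main R-tree 2}) \Rightarrow (\ref{main R-tree 3})$. You place equally spaced points along a Jordan loop $J$ built from geodesic sub-arcs and assert that cross-arc pairs are non-edges because ``routing through the shared endpoints of $J$'' gives arc-distance exceeding $\delta$. But routing through an endpoint yields only an \emph{upper} bound $d(a,b)\le d(a,x)+d(x,b)$; it says nothing about $d(a,b)$ from below. In a general geodesic space there may be a short geodesic from $a$ to $b$ that leaves $J$ entirely: on a round sphere, two nearby meridians form exactly such a bigon, and their equatorial midpoints are arbitrarily close even though the arc-route through either pole is long. So you cannot exclude unwanted chords, and your construction need not produce an induced cycle. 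The paper avoids this by taking an arbitrary topological embedding $\phi\colon S^1\to X$, cutting $\phi(S^1)$ into four consecutive compact arcs $S_1,\dots,S_4$, and using compactness to secure $r>0$ with $d(S_1,S_3)>2r$ and $d(S_2,S_4)>2r$; open $r$-balls centred on opposite arcs are then disjoint irrespective of the ambient geometry, and a minimal cycle visiting all four arcs is forced to be chordless of length at least four. The separation of opposite arcs is precisely the idea your argument is missing.
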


\begin{remark}
Gavril \cite[Theorem 3]{gavril1974intersection-joctsb} showed that a graph is chordal if and only if it is a subtree graph, that is, the intersection graph of subtrees of a tree. 
If we choose subtrees in the following way, then the intersection graph is strongly chordal. 
Let $ T $ be a tree and $ X_{T} $ the $ \mathbb{R} $-tree obtained by replacing the edges of $ T $ with a copy of the unit interval $ [0,1] $. 
Fix a positive integer $ r $ and take some closed balls of radius $ r $ with center at the point corresponding to a vertex of $ T $. 
The intersection graph of these closed balls is strongly chordal by Theorem \ref{main R-tree}. 
For each of such closed balls, the points in it corresponding to vertices of $ T $ induce a subtree of $ T $. 
Then the intersection graph of these subtrees of $ T $ is also strongly chordal. 
It is not clear whether any strongly chordal graph is obtained by such a way or not. 
\end{remark}

\begin{theorem}\label{main 1-manifold}
Let $ (X,d) $ be a geodesic space. 
Then the following are equivalent: 
\begin{enumerate}[(1)]
\item\label{main 1-manifold 1} Every unit ball graph on $ X $ is (claw, net)-free. 
\item\label{main 1-manifold 2} $ X $ has no tripod. 
\item\label{main 1-manifold 3} $ X $ is homeomorphic to a manifold of dimension at most $ 1 $, that is, $ X $ is similar to $ S^{1} $ or isometric to an interval, that is, a convex subset of $ \mathbb{R} $. 
\end{enumerate}
\end{theorem}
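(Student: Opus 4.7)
The plan is to establish the cycle $(3)\Rightarrow(1)\Rightarrow(2)\Rightarrow(3)$.

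First, for $(3)\Rightarrow(1)$: if $X$ is isometric to an interval, every unit ball graph on $X$ is a unit interval graph, which is (claw, net)-free by Theorem~\ref{WR unit interval}. If $X$ is similar to $S^{1}$, then any closed $\delta$-ball in $X$ is a single arc of length at most $2\delta$, so it cannot contain three points pairwise at distance strictly greater than $\delta$; this rules out claws, and an analogous arc counting argument rules out nets.

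Second, for $(1)\Rightarrow(2)$ I would argue the contrapositive. Given a tripod $Y=[x_{1},y]\cup[x_{2},y]\cup[x_{3},y]$ with $[x_{i},y]\cap[x_{j},y]=\{y\}$ for $i\neq j$, for $0<r\leq\min_{i}d(y,x_{i})$ let $p_{i}(r)\in[y,x_{i}]$ be the point at distance $r$ from $y$. The leg disjointness forces $p_{i}(r)\neq p_{j}(r)$, so $d(p_{i}(r),p_{j}(r))>0$. When some $r$ satisfies $\min_{i\neq j}d(p_{i}(r),p_{j}(r))>r$, taking $\delta=r$ realizes a claw on $\{y,p_{1}(r),p_{2}(r),p_{3}(r)\}$. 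Otherwise the space contains ``shortcuts'' between legs, and I would instead realize a net using inner points $v_{i}=p_{i}(\epsilon)$ and outer points $u_{i}=p_{i}(t)$ with $\epsilon<t$: along each leg $d(u_{i},v_{i})=t-\epsilon$ exactly, while $d(u_{i},v_{j})\geq t-\epsilon$ and the $u_{i}v_{j}$ paths are controlled by the triangle inequality; choosing $(\epsilon,t,\delta)$ separates edge from non-edge distances. The main obstacle here is that shortcuts may collapse the cross distances $d(u_{i},v_{j})$ and $d(u_{i},u_{j})$ toward $t-\epsilon$, so one must calibrate $(\epsilon,t,\delta)$ adaptively to the shortcut geometry, possibly by placing $v_{i}, u_{i}$ near the far endpoints of the legs or by iteratively replacing $Y$ with a sub-tripod on which shortcut savings are bounded.

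Finally, for $(2)\Rightarrow(3)$: assume $X$ has no tripod. At every $p\in X$, three geodesic germs emanating from $p$ and pairwise disjoint away from $p$ would assemble into a tripod, so at most two such germs exist at each point. This local valence bound yields a chart around $p$ exhibiting $X$ as a topological $1$-manifold. A connected geodesic $1$-manifold is homeomorphic to either $S^{1}$ or an interval, and the geodesic length structure refines the homeomorphism to a similarity with $S^{1}$ (up to the choice of circumference) or an isometry with a convex subset of $\mathbb{R}$. The main difficulty here is turning the soft condition ``at most two geodesic germs at each point'' into an explicit chart in the general geodesic setting, which requires carefully parametrizing maximal geodesic extensions through $p$ and checking that these cover a neighborhood of $p$ bijectively.
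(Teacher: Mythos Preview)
Your outline has genuine gaps in both $(1)\Rightarrow(2)$ and $(2)\Rightarrow(3)$; in each case the paper's argument supplies exactly the missing structural lemma.

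For $(1)\Rightarrow(2)$: your dichotomy does not close. If no $r$ yields a claw, you are left trying to realize a net with six points $v_i=p_i(\epsilon)$, $u_i=p_i(t)$, but nothing controls the cross distances $d(u_i,v_j)$ or $d(u_i,u_j)$ from below; the ambient space may provide shortcuts between legs at every scale, so the three ``outer'' vertices need not be pairwise non-adjacent, nor need $u_i$ be non-adjacent to $v_j$ for $j\ne i$. You acknowledge this obstacle but offer no mechanism to overcome it. The paper avoids this entirely: choose $r>0$ so small that each endpoint $x_i$ is at distance greater than $2r$ from the union of the other two legs (Proposition~\ref{separation}), then place points at spacing $r$ along every leg. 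The resulting unit ball graph has three chordless paths whose far ends $a_0,b_0,c_0$ are leaves and whose near ends $a_l,b_m,c_n$ form a triangle containing $y$. A purely combinatorial lemma (Lemma~\ref{net-like graph}), proved by induction on the number of vertices, shows that any graph with this three-paths-into-a-triangle shape contains an induced claw or net. The point is that one does not try to pin down \emph{which} six vertices form the obstruction; one lets the induction find them.

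For $(2)\Rightarrow(3)$: the ``at most two geodesic germs'' picture is not available in a general geodesic space, because geodesics need not be unique and a ``maximal geodesic extension through $p$'' is not well defined. The paper instead proves a structural lemma (Lemma~\ref{no tripod lemma}): in a tripod-free geodesic space, if $[x,z]\cap[y,z]\supsetneq\{z\}$ then $x\in[y,z]$ or $y\in[x,z]$, and any interior point of a segment determines its subsegments uniquely. With this in hand the proof splits on whether $X$ is an $\mathbb{R}$-tree. If it is, two chosen points $q_\pm$ and their midpoint $q_0$ give a signed-distance map $\phi(x)=\pm d(q_0,x)$ which is shown, via the lemma, to be an isometry onto an interval. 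If it is not, Proposition~\ref{R-tree equivalent conditions} produces three segments $[x,y],[y,z],[x,z]$ pairwise meeting only at endpoints; the lemma then forces every point of $X$ to lie on one of these three segments, so $X$ equals this topological circle and is similar to $S^1$. Your chart-building sketch does not separate these two cases and does not explain how the metric conclusion (isometry or similarity, not mere homeomorphism) follows.
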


\begin{remark} \label{Hamiltonian path}
According to \cite{damaschke1990hamiltonian-hereditary-u, duffus1981forbidden-ttaaog} (See also \cite[Theorem 7.1.7]{brandstadt1999graph}), every connected (claw, net)-free graph has a Hamiltonian path. 
Hence we can deduce that a graph is (claw, net)-free if and only if it is a Hamiltonian-hereditary graph, that is, every connected induced subgraph of it has a Hamiltonian path. 

Clearly, unit ball graphs on intervals and $ S^{1} $ are Hamiltonian-hereditary graphs. 
Hence $ (\ref{main 1-manifold 3}) \Rightarrow (\ref{main 1-manifold 1}) $ of Theorem \ref{main 1-manifold} is true. 
Moreover, Theorem \ref{main 1-manifold} asserts that geodesic spaces whose unit ball graphs are Hamiltonian-hereditary graphs are exactly intervals and $ S^{1} $. 
\end{remark}

Theorem \ref{main R-tree}, \ref{main 1-manifold}, and Table \ref{Tab:distinguishing table} 
lead to the following corollaries. 

\begin{corollary}\label{main interval}
Let $ (X,d) $ be a geodesic space. 
Then the following are equivalent: 
\begin{enumerate}[(1)]
\item $ \UBG(X) = \UBG(\mathbb{R}) $. 
\item $ X $ is isometric to an interval which is not the single-point space. 
\end{enumerate}
\end{corollary}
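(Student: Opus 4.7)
The plan is to read off the corollary from the two main theorems and the distinguishing data collected in Table \ref{Tab:distinguishing table}. Since (2) $\Rightarrow$ (1) is the easy scaling direction, and is essentially an instance of Corollary \ref{sufficient condition for coincidence}, I would handle it first. If $X$ is isometric to an interval $I \subseteq \mathbb{R}$ that is not a single point, then $\UBG(X) = \UBG(I) \subseteq \UBG(\mathbb{R})$ is immediate. For the reverse inclusion, given a realization $\rho : V_G \to \mathbb{R}$ with threshold $\delta$ of a unit interval graph $G$, I would pick a positive $\lambda$ small enough so that $\lambda \cdot \rho(V_G)$, after a suitable translation, lies in $I$; the translated map together with threshold $\lambda\delta$ then realizes $G$ inside $I$. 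The assumption that $I$ is not a single point is used precisely to guarantee that such a $\lambda$ exists.

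For (1) $\Rightarrow$ (2), I would feed the hypothesis through both structural theorems. By Theorem \ref{WR unit interval}, every graph in $\UBG(\mathbb{R})$ is chordal and (claw, net)-free; hence under $\UBG(X) = \UBG(\mathbb{R})$ every unit ball graph on $X$ is both chordal and (claw, net)-free. Theorem \ref{main R-tree} then forces $X$ to be an $\mathbb{R}$-tree, while Theorem \ref{main 1-manifold} forces $X$ to be either similar to $S^1$ or isometric to an interval. The circle alternative must be discarded: a circle of any radius contains two distinct arcs between antipodal points and so fails to be uniquely arc-connected, contradicting the $\mathbb{R}$-tree conclusion. Equivalently, as recorded in Table \ref{Tab:distinguishing table}, the cycle $C_4$ lies in $\UBG(S^1) \setminus \UBG(\mathbb{R})$, which gives the same contradiction. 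Thus $X$ is isometric to an interval, and that interval cannot be a single point, since in that case $\UBG(X)$ would consist only of complete graphs, whereas $\UBG(\mathbb{R})$ clearly contains the edgeless graph on two vertices.

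The main obstacle is not in the corollary itself but is already packaged into Theorems \ref{main R-tree} and \ref{main 1-manifold}; once both are available, the present statement reduces to a short bookkeeping step. The only point that deserves a touch of care is verifying that the circle alternative is genuinely incompatible with the $\mathbb{R}$-tree conclusion, and this is handled uniformly by either the topological observation about antipodal arcs or the combinatorial observation about $C_4$, independently of the precise meaning of ``similar to $S^1$'' in the statement of Theorem \ref{main 1-manifold}.
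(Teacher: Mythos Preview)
Your argument is correct and matches the paper's approach: the paper does not write out a proof but simply remarks that the corollary follows from Theorems \ref{main R-tree} and \ref{main 1-manifold} together with Table \ref{Tab:distinguishing table}, and you have filled in exactly these details. The one cosmetic difference is that you invoke Theorem \ref{main R-tree} to exclude the circle alternative, whereas the paper's phrasing suggests excluding it directly via the entry $C_4 \in \UBG(S^1)\setminus\UBG(\mathbb{R})$ in the table (which you also mention as an alternative); either route is fine.
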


\begin{corollary}\label{main circle}
Let $ (X,d) $ be a geodesic space. 
Then the following are equivalent: 
\begin{enumerate}[(1)]
\item $ \UBG(X) = \UBG(S^{1}) $. 
\item $ X $ is similar to $ S^{1} $. 
\end{enumerate}
\end{corollary}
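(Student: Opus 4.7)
My plan is to derive the corollary directly from Theorems \ref{main R-tree} and \ref{main 1-manifold} together with Table \ref{Tab:distinguishing table}, treating the two directions separately.

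The direction (2) $\Rightarrow$ (1) will be immediate: if $X$ is similar to $S^{1}$, then its metric is a positive scalar multiple of the pullback of the great-circle metric, so rescaling the threshold $\delta$ yields a bijection between realizations on $X$ and realizations on $S^{1}$, hence $\UBG(X) = \UBG(S^{1})$.

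For (1) $\Rightarrow$ (2), I start by observing from Table \ref{Tab:distinguishing table} that every graph in $\UBG(S^{1})$ is (claw, net)-free. Under the hypothesis $\UBG(X) = \UBG(S^{1})$, every unit ball graph on $X$ is therefore (claw, net)-free, so Theorem \ref{main 1-manifold} leaves exactly two possibilities: $X$ is similar to $S^{1}$, or $X$ is isometric to an interval.

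To eliminate the interval case, I plan to use $C_{4}$ as a separator. Table \ref{Tab:distinguishing table} records $C_{4} \in \UBG(S^{1})$, and hence by hypothesis $C_{4} \in \UBG(X)$. If $X$ were isometric to an interval, however, then $X$ would be an $\mathbb{R}$-tree, and Theorem \ref{main R-tree} would force every unit ball graph on $X$ to be chordal, hence $C_{4}$-free. This contradiction rules out the interval alternative, so $X$ must be similar to $S^{1}$. The overall argument is a short deduction from the two main theorems; I do not anticipate any real obstacle beyond correctly bookkeeping which single forbidden subgraph (here $C_{4}$) distinguishes the circle from the interval case.
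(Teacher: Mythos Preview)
Your proposal is correct and follows essentially the same route the paper intends: the paper states only that Corollary~\ref{main circle} follows from Theorems~\ref{main R-tree} and~\ref{main 1-manifold} together with Table~\ref{Tab:distinguishing table}, and your argument spells out precisely that deduction, using $(\text{claw},\text{net})$-freeness of $\UBG(S^{1})$ to invoke Theorem~\ref{main 1-manifold} and then $C_{4}\in\UBG(S^{1})$ with Theorem~\ref{main R-tree} to exclude the interval case.
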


The organization of this article is as follows. 
In Section \ref{Sec:basic properties}, we study basic properties of unit ball graphs. 
In Section \ref{Sec:R-trees}, we review the theory of $ \mathbb{R} $-trees and prove Theorem \ref{main R-tree}. 
In Section \ref{Sec:1-manifold}, we will prove Theorem \ref{main 1-manifold}. 

\section{Basic properties}\label{Sec:basic properties}
First of all, we begin with the following proposition, which is easy to prove. 
\begin{proposition}
The class of unit ball graphs on a metric space $ (X,d) $ is hereditary. 
Namely, if $ G \in \UBG(X) $, then every induced subgraph of $ G $ belongs to $ \UBG(X) $. 
\end{proposition}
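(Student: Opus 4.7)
The plan is to unpack the definitions and observe that the realization restricts nicely. Suppose $G = (V_G, E_G) \in \UBG(X)$ with threshold $\delta > 0$ and realization $\rho : V_G \to X$, so that $\{u,v\} \in E_G$ if and only if $d(\rho(u), \rho(v)) \leq \delta$. Let $H$ be an induced subgraph of $G$ with vertex set $W \subseteq V_G$ and edge set $E_H = \{\{u,v\} \in E_G \mid u,v \in W\}$.

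The natural candidate for a realization of $H$ is the restriction $\rho|_W : W \to X$, reusing the same threshold $\delta$. I would then verify the defining biconditional directly: for $u,v \in W$, the edge $\{u,v\}$ lies in $E_H$ if and only if it lies in $E_G$ (by the definition of ``induced''), which holds if and only if $d(\rho(u), \rho(v)) \leq \delta$, that is, if and only if $d(\rho|_W(u), \rho|_W(v)) \leq \delta$. Hence $H \in \UBG(X)$.

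There is really no obstacle here; the content of the statement is that the defining condition for membership in $\UBG(X)$ is local in the sense that it depends only on pairwise distances of realized points, so passing to a subset of vertices preserves the property automatically. The only detail worth flagging is that one must take an \emph{induced} subgraph rather than an arbitrary subgraph, since deleting an edge without deleting an endpoint would violate the ``if'' direction of the biconditional: the realized points would still be within distance $\delta$, forcing the edge to exist in any unit ball graph built from the same realization.
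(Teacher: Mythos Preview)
Your proof is correct and is exactly the natural argument: restrict the realization to the subset of vertices and keep the same threshold. The paper itself does not give a proof of this proposition, merely remarking that it is ``easy to prove,'' so there is nothing further to compare.
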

Therefore every class $ \UBG(X) $ has a characterization in terms of forbidden induced subgraphs. 
However, as mentioned in the introduction, it is difficult to characterize $ \UBG(X) $ in general. 
Next, we treat the most trivial case, that is, the single-point space $ \{\ast\} $. 
\begin{proposition}\label{single-point}
The following statements hold true: 
\begin{enumerate}[(1)]
\item \label{single-point 1} $ \UBG(\{\ast\}) $ consists of complete graphs, or equivalently $ 2K_{1} $-free graphs. 
\item \label{single-point 2} Let $ (X,d) $ be a metric space. 
Then $ \UBG(X) = \UBG(\{\ast\}) $ if and only if $ X = \{\ast\} $. 
\end{enumerate}
\end{proposition}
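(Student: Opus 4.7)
My plan is to handle the two parts in sequence, since both are essentially immediate from unwinding the definition of a unit ball graph in the degenerate setting.

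For part (\ref{single-point 1}), I will begin by observing that for any realization $\rho \colon V_G \to \{\ast\}$ with any threshold $\delta > 0$, we automatically have $d(\rho(u), \rho(v)) = 0 \leq \delta$ for every pair $u, v$, so $G$ must be complete. Conversely, given any complete graph $G$, one can realize it by sending every vertex to $\ast$ and choosing any positive $\delta$. The parenthetical equivalence ``complete $\iff$ $2K_1$-free'' will follow from the observation that a simple graph fails to be complete precisely when it has two non-adjacent vertices, i.e.\ an induced $2K_1$.

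For part (\ref{single-point 2}), the ``if'' direction is tautological. For the ``only if'' direction, I will argue by contrapositive: suppose $X$ contains two distinct points $x$ and $y$, and let $r \coloneqq d(x,y) > 0$. Choose any threshold $\delta$ with $0 < \delta < r$ and define a realization of the two-vertex edgeless graph $2K_1$ by sending its two vertices to $x$ and $y$ respectively; since $d(x,y) = r > \delta$, this exhibits $2K_1 \in \UBG(X)$. But $2K_1$ is not complete, so by part (\ref{single-point 1}) it does not lie in $\UBG(\{\ast\})$, and the two collections differ.

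There is no genuine obstacle here, so the only thing worth flagging is a minor point of care: when invoking $\delta < d(x,y)$ one must remember that the defining condition of a unit ball graph uses a non-strict inequality $d(\rho(u),\rho(v)) \leq \delta$, so the chosen $\delta$ must be strictly less than $d(x,y)$ (not merely at most) in order to guarantee that the realized pair is \emph{not} adjacent. With this minor bookkeeping, both parts are complete.
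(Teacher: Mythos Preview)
Your proof is correct and follows essentially the same approach as the paper: the paper dismisses part~(\ref{single-point 1}) as trivial and, for part~(\ref{single-point 2}), likewise picks two distinct points $x,y\in X$ and exhibits $2K_{1}$ as a unit ball graph using the threshold $d(x,y)/3$. Your version is slightly more detailed (you spell out both directions of~(\ref{single-point 1}) and note the need for strict inequality in the threshold), but there is no substantive difference.
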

\begin{proof}
The assertion (\ref{single-point 1}) is trivial. 
To show (\ref{single-point 2}), take two distinct points $ x, y \in X $. 
Then the unit ball graph on $ \{x, y\} $ with threshold $ d(x,y)/3 $ is $ 2K_{1} $. 
Thus the assertion holds. 
\end{proof}

By definition, a unit ball graph is the intersection graph of finitely many \emph{closed} balls of the same size. 
Next we show that we may use \emph{open} balls instead of closed ones. 
\begin{proposition}
A simple graph $ G=(V_{G},E_{G}) $ is a unit ball graph on a metric space $ (X,d) $ if and only if  there exist $ \delta > 0 $ and a map $ \rho \colon V_{G} \to X $ such that $ \{u,v\} \in E_{G} $ if and only if $ d(\rho(u), \rho(v)) < \delta $. 
\end{proposition}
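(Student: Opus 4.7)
The plan is to prove both directions by perturbing the threshold $\delta$ while keeping the realization $\rho$ fixed. The key observation that makes this work is that the set of pairwise distances
\begin{equation*}
D \coloneqq \{d(\rho(u), \rho(v)) : u, v \in V_{G}\}
\end{equation*}
is finite because $V_G$ is finite. Since no distance in $D$ lies strictly between $\max\{r \in D : r < \delta\}$ and $\delta$ (when this set is nonempty), and similarly no distance lies strictly between $\delta$ and $\min\{r \in D : r > \delta\}$, we have room to slide the threshold slightly without changing which pairs satisfy the inequality.

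For the $(\Rightarrow)$ direction, assume $G$ admits a closed-ball realization $(\delta, \rho)$, so $\{u,v\} \in E_G$ iff $d(\rho(u), \rho(v)) \leq \delta$. Set $D^{+} \coloneqq \{r \in D : r > \delta\}$. If $D^+ = \emptyset$, take $\delta' \coloneqq \delta + 1$; otherwise take $\delta' \coloneqq (\delta + \min D^+)/2$, which satisfies $\delta < \delta' < \min D^+$. Then for any $u, v$: if $d(\rho(u),\rho(v)) \leq \delta$, certainly $d(\rho(u),\rho(v)) < \delta'$; and if $d(\rho(u),\rho(v)) > \delta$, then $d(\rho(u),\rho(v)) \in D^+$, hence $d(\rho(u),\rho(v)) \geq \min D^+ > \delta'$. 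Thus $(\delta', \rho)$ is an open-ball realization of $G$.

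For the $(\Leftarrow)$ direction, assume $(\delta, \rho)$ is an open-ball realization and let $D^{-} \coloneqq \{r \in D : r < \delta\}$. If $D^- = \emptyset$, then $G$ has no edges, and we can pick any $\delta' > 0$ smaller than every positive element of $D$ (for instance half the minimum such value, or $\delta'\coloneqq \delta/2$ if $D \subseteq \{0\}\cup[\delta, \infty)$); otherwise set $\delta' \coloneqq (\delta + \max D^-)/2$, so $\max D^- < \delta' < \delta$. The same verification as before, with inequalities reversed, shows $(\delta', \rho)$ is a closed-ball realization.

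There is no substantial obstacle here; the argument is essentially a routine finiteness perturbation. The only mild care needed is in the degenerate cases where the graph is complete or edgeless, so that one of $D^+$ or $D^-$ is empty, and in ensuring the resulting $\delta'$ remains strictly positive, as required by the definition of a unit ball graph.
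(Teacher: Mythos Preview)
Your proof is correct and follows essentially the same approach as the paper's: exploit the finiteness of $V_G$ to find a gap in the set of realized distances and slide the threshold into that gap. The only difference is cosmetic---the paper takes $\delta' = \min\{d(\rho(u),\rho(v)) : \{u,v\}\notin E_G\}$ (your $\min D^+$) rather than the midpoint, and it glosses over the degenerate complete/edgeless cases that you handle explicitly.
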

\begin{proof}
Assume that $ G $ is a unit ball graph on $ X $. 
Then, by definition, there exist $ \delta > 0 $ and a map $ \rho \colon V_{G} \to X $ such that $ \{u,v\} \in E_{G} $ if and only if $ d(\rho(u), \rho(v)) \leq \delta $. 
Let $ \delta^{\prime} = \min\Set{d(\rho(u), \rho(v)) | \{u,v\} \not\in E_{G}} $. 
Then we have $ \{u,v\} \in E_{G} $ if and only if $ d(\rho(u), \rho(v)) < \delta^{\prime} $. 
The converse can be proven in a similar way.
\end{proof}

Next, we give a sufficient condition for inclusion of the classes of unit ball graphs. 
\begin{proposition}\label{sufficient condition for inclusion}
Let $ (X,d_{X}) $ and $ (Y,d_{Y}) $ be metric spaces. 
Suppose that every finite subset in $ X $ is similarly embedded into $ Y $. 
Namely, assume that, for any finite subset $ S $, there exist $ r > 0 $ and a map $ f \colon S \to Y $ such that $ d_{Y}(f(a),f(b)) = rd_{X}(a,b) $ for any $ a,b \in S $. 
Then $ \UBG(X) \subseteq \UBG(Y) $. 
\end{proposition}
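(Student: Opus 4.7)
The plan is direct: lift the realization of a unit ball graph on $X$ through a similarity into $Y$, and check that the edge condition is preserved when the threshold is scaled correspondingly.

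First I would take an arbitrary $G = (V_{G}, E_{G}) \in \UBG(X)$ with threshold $\delta > 0$ and realization $\rho \colon V_{G} \to X$. Since $V_{G}$ is finite, the image $S \coloneqq \rho(V_{G}) \subseteq X$ is a finite subset, so by hypothesis there exist $r > 0$ and a map $f \colon S \to Y$ with $d_{Y}(f(a), f(b)) = r\, d_{X}(a,b)$ for all $a, b \in S$. I would then set $\rho^{\prime} \coloneqq f \circ \rho \colon V_{G} \to Y$ and $\delta^{\prime} \coloneqq r\delta$.

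Next I would verify that $(\rho^{\prime}, \delta^{\prime})$ realizes $G$ in $Y$. For any two distinct vertices $u, v \in V_{G}$,
\begin{align*}
d_{Y}(\rho^{\prime}(u), \rho^{\prime}(v)) = r\, d_{X}(\rho(u), \rho(v)),
\end{align*}
so the inequality $d_{Y}(\rho^{\prime}(u), \rho^{\prime}(v)) \leq \delta^{\prime}$ holds if and only if $d_{X}(\rho(u), \rho(v)) \leq \delta$, which holds if and only if $\{u,v\} \in E_{G}$. Hence $G \in \UBG(Y)$, giving $\UBG(X) \subseteq \UBG(Y)$.

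There is really no hard step here; the only point requiring a moment of care is that $\rho$ need not be injective, so the similarity $f$ must be regarded as defined on the \emph{set} $\rho(V_{G})$ rather than on $V_{G}$ itself. This is harmless, because vertices with the same image in $X$ have distance $0$, hence also distance $0$ in $Y$ after applying $f$, so the (non)edge relation between them is preserved. Thus the main content of the proposition is simply that scaling by $r$ converts a threshold $\delta$ in $X$ to a matching threshold $r\delta$ in $Y$.
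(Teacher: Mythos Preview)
Your proof is correct and follows essentially the same approach as the paper: apply the hypothesized similarity to the finite image $\rho(V_{G})$ and compose with $\rho$ to obtain a realization in $Y$ with scaled threshold. (In fact the paper writes the new threshold as $\delta/r$, which is a slip; your value $r\delta$ is the correct one given $d_{Y}(f(a),f(b)) = r\,d_{X}(a,b)$.)
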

\begin{proof}
Let $ G \in \UBG(X) $ with a realization $ \rho $ and a threshold $ \delta $. 
Since $ \rho(V_{G}) $ is finite, by the assumption, there exist $ r > 0 $ and $ f \colon \rho(V_{G}) \to Y $ such that $ d_{Y}(f(\rho(u)), f(\rho(v))) = rd_{X}(\rho(u),\rho(v)) $ for any $ u,v \in V_{G} $. 
Hence we conclude that $ G \in \UBG(Y) $ with a realization $ f \circ \rho $ and a threshold $ \delta/r $. 
\end{proof}
\begin{corollary}\label{sufficient condition for coincidence}
Let $ (X,d_{X}) $ and $ (Y,d_{Y}) $ be metric spaces. 
Suppose that every finite subset in $ X $ is similarly embedded into $ Y $, and vice versa.
Then $ \UBG(X) = \UBG(Y) $. 
\end{corollary}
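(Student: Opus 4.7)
The plan is to observe that this corollary follows immediately by applying Proposition \ref{sufficient condition for inclusion} twice, once in each direction.

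First, the hypothesis that every finite subset of $X$ is similarly embedded into $Y$ is exactly the hypothesis of Proposition \ref{sufficient condition for inclusion} with source $X$ and target $Y$, so it yields $\UBG(X) \subseteq \UBG(Y)$. Then, the symmetric hypothesis that every finite subset of $Y$ is similarly embedded into $X$ is exactly the hypothesis with the roles of $X$ and $Y$ exchanged, giving $\UBG(Y) \subseteq \UBG(X)$. Combining the two inclusions yields the equality $\UBG(X) = \UBG(Y)$.

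There is essentially no obstacle: the substantive content has already been carried out in the preceding proposition, where one shows that a similarity $f$ on $\rho(V_G) \subseteq X$ transports a realization $\rho$ with threshold $\delta$ to a realization $f \circ \rho$ with threshold $\delta / r$. The corollary is merely the symmetric version, so the write-up should be only a line or two invoking Proposition \ref{sufficient condition for inclusion} twice.
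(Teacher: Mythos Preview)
Your proposal is correct and matches the paper's approach: the paper states this as an immediate corollary of Proposition \ref{sufficient condition for inclusion} with no separate proof, and your argument of applying that proposition once in each direction is exactly the intended one-line justification.
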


From Proposition \ref{sufficient condition for inclusion}, we have that $ \UBG(\mathbb{R}^{m}) \subseteq \UBG(\mathbb{R}^{n}) $ whenever $ m \leq n $. 
Therefore, intuitively, higher dimensional spaces could have more unit ball graphs. 
However, the converse is not true in general as follows. 
\begin{proposition}
There exists a geodesic space $ X $ such that its Lebesgue covering dimension is $ 1 $ and $ \UBG(X) $ consists of all graphs. 
\end{proposition}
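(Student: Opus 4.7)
The plan is to construct $X$ as an infinite metric graph, assembled from one finite piece per isomorphism class of finite graphs, each piece engineered so that its corresponding graph is realized as a unit ball graph.

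For each finite simple graph $G$ on vertices $\{v_{1},\dots,v_{n}\}$, I would first build a finite metric graph $M_{G}$ with vertex set $\{v_{1},\dots,v_{n}, \star_{G}\}$: put an edge of length $1$ for every edge of $G$, and an edge of length $2$ from $\star_{G}$ to every $v_{i}$, and equip $M_{G}$ with its path metric. With threshold $\delta=1$ and the realization sending each $v_{i}$ to itself, I would verify $G \in \UBG(M_{G})$: adjacent vertices are joined by a direct edge of length $1$, while any path between non-adjacent $v_{i}, v_{j}$ either goes through a common neighbor (length $\geq 1+1 = 2$), through $\star_{G}$ (length $2+2=4$), or through a longer chain, so the distance is at least $2 > 1$. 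Next I would enumerate representatives $G_{1}, G_{2},\dots$ of the isomorphism classes of finite simple graphs and assemble $X$ by taking a ray $R = [0,\infty)$ and identifying $\star_{G_{i}}$ with the point $i \in R$ for each $i$. The resulting $X$, equipped with its path metric, is connected and geodesic; being a countable $1$-dimensional CW-complex realized as a metric graph, its Lebesgue covering dimension is $1$.

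It remains to see that distances within a given piece $M_{G_{i}}$ are not shortened inside the assembled space $X$. Since $\star_{G_{i}}$ is a cut point separating $M_{G_{i}} \setminus \{\star_{G_{i}}\}$ from the rest of $X$, every path from one vertex $v_{j} \in M_{G_{i}}$ to another that leaves $M_{G_{i}}$ must pass through $\star_{G_{i}}$ both on the way out and on the way back, contributing at least $2 \cdot 2 = 4$ before any outside travel is counted; this exceeds every internal distance between two vertices of $G_{i}$. Hence the realization that works inside $M_{G_{i}}$ still realizes $G_{i}$ as a unit ball graph on $X$, and since every finite simple graph is isomorphic to some $G_{i}$, the class $\UBG(X)$ contains all graphs.

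The main obstacle is just bookkeeping about the path metric; the crucial numerical choice is the factor $2$ on the $\star_{G}$-edges, which simultaneously guarantees that non-adjacent pairs inside any $M_{G}$ stay at distance $\geq 2$, and that shortcuts through other pieces when forming $X$ cost at least $4$ and are therefore irrelevant.
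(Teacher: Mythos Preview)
Your argument is correct and follows the same strategy as the paper: assemble a one-dimensional metric graph out of pieces, one for each finite graph $G$, inside which $G$ is already realised as a unit ball graph. The paper's pieces are simply the metric realisations $X_{G}$ of \emph{connected} $G$ (edges replaced by unit intervals) with a pendant edge of length $1$ attached at a chosen vertex; all these pieces, together with countably many copies of each, are then wedged at a single point, and a disconnected graph is realised by putting its components into separate copies. Your construction differs in two cosmetic but pleasant ways: adjoining the apex $\star_{G}$ at distance $2$ makes every $M_{G}$ connected and handles disconnected $G$ uniformly, so a single copy per isomorphism class suffices; and stringing the pieces along a ray instead of wedging them at one point keeps $X$ locally finite, which makes the verification that $X$ is a geodesic space immediate via Hopf--Rinow. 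The explicit choice of length $2$ for the apex edges and the cut-point argument preventing shortcuts are exactly the bookkeeping the paper leaves implicit.
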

\begin{proof}
Let $ G $ be a connected graph and $ X_{G} $ the geodesic space obtained by replacing the edges of $ G $ with a copy of the unit interval $ [0,1] $ (so $ X_{G} $ is the underlying space of a $ 1 $-dimensional simplicial complex). 
Clearly, $ G \in \UBG(X_{G}) $. 
Choose a vertex of $ G $ and connect a geodesic segment of length $ 1 $ to the corresponding point of $ X_{G} $. 
Let $ X $ be a geodesic space obtained by gluing the other endpoints with respect to each connected graph and its countably many copies. 
Then every graph belongs to $ \UBG(X) $. 
Obviously, the Lebesgue covering dimension of $ X $ is $ 1 $. 
\end{proof}

It is not clear whether the converse of Corollary \ref{sufficient condition for coincidence} is true or not. 
However, for geodesic spaces $ \mathbb{R}, S^{1} $, and $ \{\ast\} $, it is true by Corollary \ref{main interval}, \ref{main circle}, and Proposition \ref{single-point} (\ref{single-point 2}). 

\section{Chordal graphs and $ \mathbb{R} $-trees }\label{Sec:R-trees}
In this section, we will give a proof of Theorem \ref{main R-tree}. 
\begin{theorem}[Restate of Theorem \ref{main R-tree}]
Let $ (X,d) $ be a geodesic space. 
Then the following are equivalent: 
\begin{enumerate}[(1)]
\item Every unit ball graph on $ X $ is strongly chordal. 
\item Every unit ball graph on $ X $ is chordal. 
\item $ X $ is an $ \mathbb{R} $-tree. 
\end{enumerate}
\end{theorem}

Note that the implication $ (\ref{main R-tree 1}) \Rightarrow (\ref{main R-tree 2}) $ follows by definition. 
As mentioned before, an $ \mathbb{R} $-tree is a geodesic space in which every pair of points is joined by a unique arc, that is, the image of a \emph{topological} embedding of a closed interval. 
Note that every arc in an $ \mathbb{R} $-tree is a (unique) geodesic segment and hence an $ \mathbb{R} $-tree is uniquely geodesic. 
Obviously, the real line $ \mathbb{R} $ and intervals are $ \mathbb{R} $-trees. 
The underlying space of a $ 1 $-dimensional connected acyclic simplicial complex is also an $ \mathbb{R} $-tree. 

\begin{proposition}[See {\cite[Proposition 2.3]{chiswell2001introduction}} and {\cite{bridson1999metric}}, for example]\label{R-tree equivalent conditions}
Let $ (X,d) $ be a geodesic space. 
Then the following conditions are equivalent. 
\begin{enumerate}[(1)]
\item $ X $ is an $ \mathbb{R} $-tree 
\item $ X $ has no subspace homeomorphic to $ S^{1} $. 
\item For any $ x,y,z \in X $, whenever $ [x,y] \cap [y,z] = \{y\} $, the union $ [x,y] \cup [y,z] $ is a geodesic segment joining $ x $ and $ z $. 
\item\label{R-tree equivalent conditions 4} $ X $ is a Gromov $ 0 $-hyperbolic space, that is, for any geodesic segments $ [x,y], [x,z], [y,z] $, we have $ [x,z] \subseteq [x,y] \cup [y,z] $. 
\end{enumerate}
\end{proposition}

\subsection{Proof of Theorem \ref{main R-tree} (\ref{main R-tree 2}) $ \Rightarrow $ (\ref{main R-tree 3})}
The following two propositions are required. 
The first one is very famous. 
\begin{proposition}\label{separation}
Suppose that $ A $ and $ B $ be disjoint compact subsets of a metric space $ (X, d) $. 
Then
\begin{align*}
d(A,B) \coloneqq \inf_{\substack{a \in A \\ b \in B}} d(a,b) > 0. 
\end{align*}
\end{proposition}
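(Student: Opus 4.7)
The plan is to use the standard fact that a continuous real-valued function on a compact space attains its infimum, together with the disjointness hypothesis to conclude that this infimum is strictly positive.

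First I would observe that the product $A \times B$ is compact, being the product of two compact subsets of $(X,d)$. Next I would note that the distance function $d \colon X \times X \to \mathbb{R}_{\geq 0}$ is continuous (this follows at once from the triangle inequality; e.g.\ equipping $X \times X $ with the sup metric, $ | d(a,b) - d(a',b') | \leq d(a,a') + d(b,b') $). Restricting to $A \times B$ gives a continuous real-valued function on a compact set, so it attains its infimum at some point $(a_{0}, b_{0}) \in A \times B $; that is, $ d(A,B) = d(a_{0}, b_{0}) $.

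To finish, since $A \cap B = \emptyset$ and $ a_{0} \in A $, $ b_{0} \in B $, we have $ a_{0} \neq b_{0} $, so by positivity of the metric $ d(a_{0}, b_{0}) > 0 $, and hence $ d(A,B) > 0 $.

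There is no real obstacle here; the only mild subtlety is justifying that the infimum is attained (rather than merely approached), which is precisely where compactness of both $A$ and $B$ is used. An equivalent formulation would pick minimizing sequences $ (a_{n}) \subseteq A $, $ (b_{n}) \subseteq B $ with $ d(a_{n}, b_{n}) \to d(A,B) $, extract convergent subsequences by sequential compactness, and pass to the limit using continuity of $d$; the conclusion is identical. Since the statement will be used later only to separate specific compact configurations (typically, a point from a closed geodesic subsegment, or two disjoint geodesic segments in later sections), this clean "attained infimum" form is exactly what is needed.
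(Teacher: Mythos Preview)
Your proof is correct. The paper does not give its own proof of this proposition; it simply states it as a well-known fact (``very famous''), so there is no approach to compare against, and your standard compactness argument is perfectly adequate.
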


\begin{proposition}\label{Connectedness}
Let $ S $ be a connected subset of a topological space and $ \mathcal{U} $ a finite open covering of $ S $ in which each member intersects with $ S $. 
Then the intersection graph of $ \mathcal{U} $ is connected. 
\end{proposition}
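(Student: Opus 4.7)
The plan is to argue by contraposition, showing that if the intersection graph of $\mathcal{U}$ is disconnected then $S$ itself admits a separation. So suppose the intersection graph is disconnected. Then I can partition the covering as $\mathcal{U} = \mathcal{U}_{1} \sqcup \mathcal{U}_{2}$ with both parts nonempty and with no member of $\mathcal{U}_{1}$ meeting any member of $\mathcal{U}_{2}$.

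Next I would form the two open sets $V_{i} \coloneqq \bigcup \mathcal{U}_{i}$ for $i=1,2$. These are open in the ambient topological space as unions of open sets. The key observation is that $V_{1} \cap V_{2}$ can be expanded as the union over pairs $(U,W) \in \mathcal{U}_{1} \times \mathcal{U}_{2}$ of $U \cap W$, each of which is empty by the partition hypothesis; hence $V_{1} \cap V_{2} = \varnothing$. Moreover, $S \subseteq V_{1} \cup V_{2}$ since $\mathcal{U}$ covers $S$, and each $V_{i} \cap S$ is nonempty because $\mathcal{U}_{i}$ is nonempty and, by hypothesis, every member of the covering meets $S$.

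It follows that $S = (V_{1} \cap S) \sqcup (V_{2} \cap S)$ is a disjoint decomposition of $S$ into two nonempty relatively open subsets, contradicting the connectedness of $S$. Therefore the intersection graph must be connected. The finiteness of $\mathcal{U}$ is not essential for this argument, but it matches the setting in which the proposition will be applied. I do not anticipate a real obstacle here; the only subtlety is being careful that ``no member of $\mathcal{U}_{1}$ meets a member of $\mathcal{U}_{2}$'' really does give disjointness of $V_{1}, V_{2}$ in the ambient space rather than merely inside $S$, which is what lets the separation argument go through cleanly.
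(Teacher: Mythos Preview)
Your proof is correct and essentially identical to the paper's own argument: both proceed by contraposition, split $\mathcal{U}$ into two nonempty parts with no cross-intersections, take the unions $V_i$ (the paper calls them $O_i$), and observe that these give a separation of $S$. Your additional remark that finiteness is inessential is accurate and not mentioned in the paper, but the core reasoning is the same.
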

\begin{proof}
Assume that the intersection graph of $ \mathcal{U} $ is disconnected. 
Then there exist non-empty subsets $ \mathcal{U}_{1} $ and $ \mathcal{U}_{2} $ of $ \mathcal{U} $ such that $ \mathcal{U} = \mathcal{U}_{1} \cup \mathcal{U}_{2}, \ \mathcal{U}_{1} \cap \mathcal{U}_{2} = \varnothing $, and $ U_{1} \cap U_{2} = \varnothing $ for any $ U_{i} \in \mathcal{U}_{i} (i =1,2) $. 
For each $ i \in \{1,2\} $, let $ O_{i} \coloneqq \bigcup_{U \in \mathcal{U}_{i}} U$. 
Then $ S \subseteq O_{1} \cup O_{2} $ and $ S \cap O_{i} \neq \varnothing $ for each $ i \in \{1,2\} $. 
Therefore $ S $ is disconnected. 
Thus the assertion holds. 
\end{proof}

\begin{proof}[Proof of Theorem \ref{main R-tree} (\ref{main R-tree 2}) $ \Rightarrow $ (\ref{main R-tree 3})]
We assume that $ X $ is not an $ \mathbb{R} $-tree and show that there exists $ G \in \UBG(X) $ such that $ G $ is not chordal. 
By Proposition \ref{R-tree equivalent conditions}, there exists a topological embedding $ \phi \colon S^{1} \to X $. 
We consider $ S^{1} $ as $ \mathbb{R}/4\mathbb{Z} $ and put $ p_{i} \coloneqq \phi(i), 
\text{ and } S_{i} \coloneqq \phi([i,i+1]) \text{ for } i \in \{1,2,3,4\} \subseteq \mathbb{R}/4\mathbb{Z} $ (see Figure \ref{Fig:image of S^1}). 
By Proposition \ref{separation}, there exists $ r > 0 $ such that $ d(S_{1}, S_{3}) > 2r $ and $ d(S_{2}, S_{4}) > 2r $. 
Let $ U_{r}(x) $ denote the open ball of radius $ r $ with center $ x $. 
Note that $ U_{r}(x) \cap U_{r}(y) = \varnothing $ whenever $ (x,y) \in S_{1} \times S_{3} $ or $ (x,y) \in S_{2} \times S_{4} $. 
For every $ i $, $ \Set{U_{r}(x)}_{x \in S_{i}} $ is an open covering of $ S_{i} $. 
Since $ S_{i} $ is compact, we have a finite subset $ V_{i} \subseteq S_{i} $ such that $ \Set{U_{r}(x)}_{x \in V_{i}} $ is a finite open covering of $ S_{i} $. 
Adding two points $ p_{i}, p_{i+1} $ if necessary, we may suppose that $ p_{i}, p_{i+1} \in V_{i} $. 
Let $ G $ be the intersection graph of $ \Set{U_{r}(x)}_{x \in V_{1} \cup \dots \cup V_{4}} $ and we will identify the vertices of $ G $ with the corresponding points. 
We will show that $ G $ is not chordal. 
By Proposition \ref{Connectedness}, each induced subgraph $ G[V_{i}] $ is connected. 
Take a shortest path $ \pi_{i} $ from $ p_{i} $ to $ p_{i+1} $ in $ G[V_{i}] $. 
Since $ p_{i} \in S_{i-1} $ and $ p_{i+1} \in S_{i+1} $, they are not adjacent. 
Therefore the length of the path $ \pi_{i} $ is at least two and every intermediate vertex of $ \pi_{i} $ is an interior point of $ S_{i} $. 
Connecting the paths $ \pi_{1}, \dots, \pi_{4} $, we have a cycle $ C = (V_{C}, E_{C})$ satisfying the following conditions. 
\begin{enumerate}[(i)]
\item\label{cycle condition i} $ V_{C} \cap V_{i} \neq \varnothing $ and $ C[V_{C} \cap V_{i}] $ is a chordless path for each $ i $. 
\item\label{cycle condition ii} $ C $ has a vertex corresponding to an interior point of $ S_{i} $ for each $ i $. 
\end{enumerate}
Suppose that $ C_{0} $ is a minimal cycle satisfying these conditions. 
By the condition (\ref{cycle condition ii}), the length of $ C_{0} $ is at least four. 
Let $ i \in \{1,2,3,4\} $. 
From (\ref{cycle condition i}), there exists no chord between two vertices in $ V_{C_{0}} \cap V_{i} $. 
By minimality of $ C_{0} $, there exists no chord connecting $ V_{C_{0}} \cap V_{i} $ and $ V_{C_{0}} \cap V_{i+1} $. 
By choice of $ r $, there exists no chord joining $ V_{C_{0}} \cap V_{i} $ and $ V_{C_{0}} \cap V_{i+2} $. 
Thus $ C_{0} $ is a chordless cycle of length at least four and hence $ G $ is a non-chordal unit ball graph on $ X $. 
\end{proof}

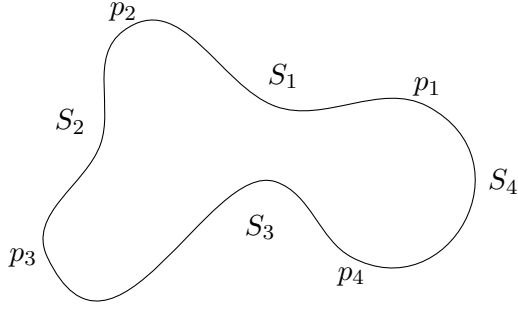
\begin{figure}[t]
\centering
\begin{tikzpicture}
\draw (0,0) coordinate[label=left:{$ p_{3} $}](1);
\draw (3,1) coordinate(2);
\draw (4,0) coordinate[label=below:{$ p_{4} $}](3);
\draw (5,2) coordinate[label=above:{$ p_{1} $}](4);
\draw (3,2) coordinate(5);
\draw (1,3) coordinate[label=above:{$ p_{2} $}](6);
\draw (0.7,1.5) coordinate(7);
\path[draw,use Hobby shortcut,closed=true] (1)..(2)..(3)..(4)..(5)..(6)..(7);
\draw (3.1,2.4) node{$ S_{1} $}; 
\draw (0.3,1.8) node{$ S_{2} $}; 
\draw (2.8,0.4) node{$ S_{3} $}; 
\draw (6,1) node{$ S_{4} $}; 
\end{tikzpicture}
\caption{The image of $ S^{1} $} \label{Fig:image of S^1}
\end{figure}

\subsection{Proof of Theorem \ref{main R-tree} (\ref{main R-tree 3}) $ \Rightarrow $ (\ref{main R-tree 1})}
\begin{proposition}\label{recombination}
Let $ (X,d) $ be a geodesic space. 
Then the following conditions hold. 
\begin{enumerate}[(1)]
\item\label{recombination 1} 
Let $ x_{1}, \dots, x_{n}, y_{1}, \dots, y_{n} \in X $. 
Suppose that $ \bigcap_{i=1}^{n}[x_{i}, y_{i}] \neq \varnothing $. 
Then $ \sum_{i=1}^{n}d(x_{i},y_{i}) \geq \sum_{i=1}^{n}d(x_{i},y_{\sigma(i)}) $ for any permutation $ \sigma $. 
\item \label{recombination 2}
Let $ G \in \UBG(X) $. 
Suppose that $ \{x_{i}, y_{i}\} \in E_{G} $ for each $ i \in \{1, \dots, n\} $ and there exists a permutation $ \sigma $ such that $ \{x_{i}, y_{\sigma(i)}\} \not\in E_{G} $ for any $ i $. 
Then $ \bigcap_{i=1}^{n}[x_{i}, y_{i}] = \varnothing $. 
\end{enumerate}
\end{proposition}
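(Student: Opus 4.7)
The plan is to prove (\ref{recombination 1}) directly from the defining property of geodesic segments, and then deduce (\ref{recombination 2}) as a quick contrapositive consequence.

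For (\ref{recombination 1}), the key observation is that any point $p$ lying on a geodesic segment $[x,y]$ satisfies $d(x,p)+d(p,y)=d(x,y)$, since by definition $[x,y]$ is the image of a distance-preserving map from $[0,d(x,y)] \subseteq \mathbb{R}$. I would pick any $p \in \bigcap_{i=1}^{n}[x_i,y_i]$, which exists by hypothesis, and split the left-hand sum as
\[
\sum_{i=1}^{n} d(x_i, y_i) \;=\; \sum_{i=1}^{n} d(x_i, p) + \sum_{i=1}^{n} d(p, y_i).
\]
Since $\sigma$ is a permutation of $\{1,\dots,n\}$, the reindexing $\sum_{i=1}^{n} d(p, y_i) = \sum_{i=1}^{n} d(p, y_{\sigma(i)})$ is automatic. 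The triangle inequality then gives $d(x_i, p) + d(p, y_{\sigma(i)}) \geq d(x_i, y_{\sigma(i)})$ termwise, and summing yields the desired inequality.

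For (\ref{recombination 2}), I argue by contrapositive. Suppose $\bigcap_{i=1}^{n}[x_i,y_i] \neq \varnothing$ and let $\delta$ be the threshold of the realization of $G$. Then $d(x_i, y_i) \leq \delta$ because $\{x_i, y_i\} \in E_G$, while $d(x_i, y_{\sigma(i)}) > \delta$ because $\{x_i, y_{\sigma(i)}\} \notin E_G$. Summing gives
\[
\sum_{i=1}^{n} d(x_i, y_i) \;\leq\; n\delta \;<\; \sum_{i=1}^{n} d(x_i, y_{\sigma(i)}),
\]
which directly contradicts part (\ref{recombination 1}).

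I foresee no substantial obstacle: the whole argument rests on the elementary equality $d(x,p)+d(p,y)=d(x,y)$ for points on a geodesic segment, and the role of the permutation is purely a bookkeeping trick that lets us convert a batch of edges into a batch of non-edges through the threshold.
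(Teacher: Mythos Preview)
Your proof is correct and follows essentially the same approach as the paper: pick a common point $p$, use $d(x_i,y_i)=d(x_i,p)+d(p,y_i)$, reindex the second sum by $\sigma$, and apply the triangle inequality termwise; then derive (\ref{recombination 2}) from (\ref{recombination 1}) via the same $n\delta$ contradiction.
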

\begin{proof}
(\ref{recombination 1})
Take a point $ p \in \bigcap_{i=1}^{n}[x_{i}, y_{i}] $. 
Then we have
\begin{align*}
\sum_{i=1}^{n}d(x_{i},y_{i}) 
&= \sum_{i=1}^{n}\left(d(x_{i},p) + d(p,y_{i})\right) 
= \sum_{i=1}^{n}d(x_{i},p) + \sum_{i=1}^{n}d(p,y_{i}) \\
&= \sum_{i=1}^{n}d(x_{i},p) + \sum_{i=1}^{n}d(p,y_{\sigma(i)}) 
= \sum_{i=1}^{n}\left(d(x_{i},p) + d(p,y_{\sigma(i)})\right)
\geq \sum_{i=1}^{n}d(x_{i},y_{\sigma(i)}), 
\end{align*}
where we apply the triangle inequality. 

(\ref{recombination 2}) 
By the assumption, there exists a threshold $ \delta > 0 $ such that $ d(x_{i}, y_{i}) \leq \delta $ and $ d(x_{i},y_{\sigma(i)}) > \delta $ for any $ i \in \{1, \dots, n\} $. 
Assume that $ \bigcap_{i=1}^{n}[x_{i}, y_{i}] \neq \varnothing $. 
Then by the assertion (\ref{recombination 1}) we have 
\begin{align*}
n\delta 
\geq \sum_{i=1}^{n}d(x_{i},y_{i}) 
\geq \sum_{i=1}^{n}d(x_{i},y_{\sigma(i)})
> n\delta. 
\end{align*}
This contradiction proves the assertion. 
\end{proof}

\begin{proposition}[Alperin-Bass {\cite[Proposition 2.17]{alperin1987length-cgtat}}, Chiswell {\cite[Lemma 1.9(b)]{chiswell2001introduction}}]\label{bridge}
Let $ A $ and $ B $ be non-empty closed connected subsets of an $ \mathbb{R} $-tree such that $ A \cap B = \varnothing $. 
Then there exist unique points $ a \in A $ and $ b \in B $ such that $ [a,b] \cap A = \{a\} $ and $ [a,b] \cap B = \{b\} $. 
\end{proposition}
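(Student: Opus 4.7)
Fix any $a_{0} \in A$ and $b_{0} \in B$ and let $\gamma \colon [0, L] \to X$ parametrize the unique geodesic segment $[a_{0}, b_{0}]$ by arc length. Since $A$ is closed and contains $a_{0}$ but not $b_{0}$, the supremum
\[ t_{a} \coloneqq \sup\{t \in [0,L] : \gamma(t) \in A\} \]
is attained (so $\gamma(t_{a}) \in A$) and satisfies $t_{a} < L$; set $a \coloneqq \gamma(t_{a})$. Since $B$ is closed, the infimum
\[ t_{b} \coloneqq \inf\{t \in [t_{a}, L] : \gamma(t) \in B\} \]
is attained, and since $a \in A$ is not in $B$ we have $t_{a} < t_{b}$; set $b \coloneqq \gamma(t_{b})$. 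The restriction $\gamma|_{[t_{a}, t_{b}]}$ is a geodesic segment $[a, b]$ whose interior avoids $A$ by maximality of $t_{a}$ and avoids $B$ by minimality of $t_{b}$, so $[a, b] \cap A = \{a\}$ and $[a, b] \cap B = \{b\}$.

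\textbf{Plan for uniqueness.} The essential auxiliary fact is that a connected subset $C$ of an $\mathbb{R}$-tree is geodesically convex: for $x, y \in C$, any interior point $z$ of $[x, y]$ is a cut point of $X$ separating $x$ from $y$ (a standard consequence of Proposition \ref{R-tree equivalent conditions}(\ref{main R-tree 2}), since otherwise a path in $X \setminus \{z\}$ joining the two halves of $[x, y] \setminus \{z\}$ would yield a subspace homeomorphic to $S^{1}$), and hence $z \in C$ lest $C \subseteq X \setminus \{z\}$ be disconnected. Granting this, let $(a', b')$ be a second pair with the stated property, so by convexity $[a, a'] \subseteq A$ and $[b, b'] \subseteq B$. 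The three geodesic segments $[a, a']$, $[a', b']$, $[b', b]$ have pairwise intersections $\{a'\}$, $\{b'\}$, and $\varnothing$ respectively: any point in $[a, a'] \cap [a', b']$ other than $a'$ would lie in $A \cap [a', b'] = \{a'\}$, symmetrically for $[a', b'] \cap [b', b]$, and $[a, a'] \cap [b', b] \subseteq A \cap B = \varnothing$. Two applications of Proposition \ref{R-tree equivalent conditions}(3) then exhibit $[a, a'] \cup [a', b'] \cup [b', b]$ as a geodesic segment from $a$ to $b$; by uniqueness of geodesics in an $\mathbb{R}$-tree it coincides with $[a, b]$. Hence $a' \in [a, b] \cap A = \{a\}$ and $b' \in [a, b] \cap B = \{b\}$, so $a = a'$ and $b = b'$.

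\textbf{Main obstacle.} Once geodesic convexity of connected subsets is in hand, both parts are almost mechanical: existence relies only on closedness, and uniqueness is a direct unpacking of Proposition \ref{R-tree equivalent conditions}(3) and unique geodesicity. The one conceptually substantive step is therefore the cut-point property of $\mathbb{R}$-trees used to justify convexity, which is where the delicate (though standard) topological content of the $\mathbb{R}$-tree axioms enters; everything else is a routine consequence of the tree axioms and the triangle inequality.
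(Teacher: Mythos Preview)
The paper does not give its own proof of this proposition: it is quoted verbatim from Alperin--Bass \cite{alperin1987length-cgtat} and Chiswell \cite{chiswell2001introduction} and used as a black box. Your argument is a correct, self-contained proof and therefore goes beyond what the paper supplies. The existence half is exactly the standard ``last exit from $A$, first entry into $B$'' construction along a fixed geodesic, using only closedness; the uniqueness half correctly isolates geodesic convexity of connected subsets (via the cut-point property, equivalently the absence of embedded circles in Proposition~\ref{R-tree equivalent conditions}) as the place where connectedness enters, and then concatenates $[a,a'] \cup [a',b'] \cup [b',b]$ into a geodesic using condition~(3) of Proposition~\ref{R-tree equivalent conditions} to force $a' \in [a,b] \cap A = \{a\}$ and $b' \in [a,b] \cap B = \{b\}$. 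One small cosmetic point: your cross-reference ``Proposition~\ref{R-tree equivalent conditions}(\ref{main R-tree 2})'' happens to typeset as the intended ``(2)'' only because the label \texttt{main R-tree 2} from Theorem~\ref{main R-tree} resolves to the numeral~$2$; it would be cleaner to write the item number explicitly.
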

\begin{definition}
We call $ [a,b] $ in Proposition \ref{bridge} the \textbf{bridge} between $ A $ and $ B $. 
Let $ Y $ be a non-empty closed connected subset of an $ \mathbb{R} $-tree and $ x $ a point. 
Call the other endpoint $ y $ of the bridge $ [x,y] $ between $ \{x\} $ and $ Y $ the \textbf{closest point} in $ Y $ to $ x $ when $ x \not\in Y $. 
If $ x \in Y $, define the closest point as $ x $ itself. 
\end{definition}

\begin{lemma}\label{UBG cycle}
Suppose that the $ n $-cycle $ C_{n} $ is a unit ball graph of a geodesic space $ X $. 
Let $ \{x_{i}\}_{i \in \mathbb{Z}/n\mathbb{Z}} $ be the vertex set of $ C_{n} $ with $ \{x_{i}, x_{i+1}\} \in E_{C_{n}} \ (i \in \mathbb{Z}/n\mathbb{Z}) $. 
Suppose that $ \{x_{i}, x_{i+1}\}, \{x_{j}, x_{j+1}\} $ are non-adjacent edges. 
Then $ [x_{i},x_{i+1}] \cap [x_{j},x_{j+1}] = \varnothing $. 
\end{lemma}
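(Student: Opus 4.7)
The plan is to argue by contradiction and reduce the statement to a direct application of Proposition \ref{recombination} (\ref{recombination 2}). Suppose there were a point $ p \in [x_{i}, x_{i+1}] \cap [x_{j}, x_{j+1}] $. I would then set up the input to Proposition \ref{recombination} (\ref{recombination 2}) with $ n = 2 $ by taking
\begin{align*}
u_{1} \coloneqq x_{i}, \quad v_{1} \coloneqq x_{i+1}, \quad u_{2} \coloneqq x_{j+1}, \quad v_{2} \coloneqq x_{j},
\end{align*}
and the non-trivial permutation $ \sigma = (1\ 2) $. Because a geodesic segment is unordered in its endpoints, $ [u_{2}, v_{2}] = [x_{j}, x_{j+1}] $, so $ p $ still lies in $ [u_{1}, v_{1}] \cap [u_{2}, v_{2}] $. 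Obtaining a contradiction then comes down to verifying that $ \{u_{1}, v_{\sigma(1)}\} = \{x_{i}, x_{j}\} $ and $ \{u_{2}, v_{\sigma(2)}\} = \{x_{j+1}, x_{i+1}\} $ are both non-edges of $ C_{n} $.

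The combinatorial verification is the main point to check carefully. In $ C_{n} $ the pair $ \{x_{k}, x_{\ell}\} $ is an edge if and only if $ k - \ell \equiv \pm 1 \pmod{n} $. The hypothesis that the two edges $ \{x_{i}, x_{i+1}\} $ and $ \{x_{j}, x_{j+1}\} $ are non-adjacent means $ \{i, i+1\} \cap \{j, j+1\} = \varnothing $ in $ \mathbb{Z}/n\mathbb{Z} $, which rewrites as $ j - i \not\equiv -1, 0, 1 \pmod{n} $. Consequently $ j - i \pmod{n} $ is not $ \pm 1 $, showing that neither $ \{x_{i}, x_{j}\} $ nor $ \{x_{i+1}, x_{j+1}\} $ is an edge of $ C_{n} $. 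With these non-edges in hand, Proposition \ref{recombination} (\ref{recombination 2}) forces $ [u_{1}, v_{1}] \cap [u_{2}, v_{2}] = \varnothing $, contradicting the existence of $ p $.

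The argument uses only the triangle-inequality/recombination mechanism already encapsulated in Proposition \ref{recombination}; no appeal to the $ \mathbb{R} $-tree structure is needed here, which is appropriate since the lemma is stated for arbitrary geodesic spaces. The only subtlety I anticipate is bookkeeping on the indices modulo $ n $ to ensure that the hypothesis ``non-adjacent'' is translated correctly into the two required non-edges; the symmetric relabeling (swapping $ x_{j} $ with $ x_{j+1} $) is what converts the otherwise inconvenient swap into the ``parallel'' pair $ \{x_{i}, x_{j}\}, \{x_{i+1}, x_{j+1}\} $, which is precisely the pair that is always guaranteed to be missing from $ C_{n} $.
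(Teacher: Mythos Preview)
Your proposal is correct and is essentially the paper's own proof: the paper also invokes Proposition \ref{recombination} (\ref{recombination 2}) with $n=2$, noting that $\{x_{i},x_{j}\}$ and $\{x_{i+1},x_{j+1}\}$ are non-edges of $C_{n}$. Your explicit relabeling $u_{2}=x_{j+1},\ v_{2}=x_{j}$ just makes transparent the step the paper leaves implicit.
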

\begin{proof}
Since $ \{x_{i}, x_{i+1}\}, \{x_{j}, x_{j+1}\} \in E_{C_{n}} $ and $ \{x_{i}, x_{j}\}, \{x_{i+1}, x_{j+1}\} \not\in E_{C_{n}} $, we have immediately $ [x_{i},x_{i+1}] \cap [x_{j},x_{j+1}] = \varnothing $ by Proposition \ref{recombination} (\ref{recombination 2}). 
\end{proof}

\begin{lemma}\label{UBG sun}
Suppose that a geodesic space $ X $ admits an $ n $-sun as a unit ball graph. 
Let $ \{x_{i}\} _{i \in \mathbb{Z}/2n\mathbb{Z}}$ be the vertex set such that even-indexed vertices induce a clique, odd-indexed vertices are independent, and two consecutive vertices form an edge. 
Then the following assertions hold. 
\begin{enumerate}[(1)]
\item\label{UBG sun 1}
Suppose that $ \{x_{i}, x_{i+1}\} $ and $ \{x_{j}, x_{j+1}\} $ are non-adjacent edges with $ j \neq i \pm 2 $. 
Then $ [x_{i}, x_{i+1}] \cap [x_{j}, x_{j+1}] = \varnothing $. 
\item\label{UBG sun 2}
When $ \{x_{i},x_{i+1}\}, \{x_{j},x_{j+1}\} $, and $ \{x_{k},x_{k+1}\} $ are non-adjacent edges, we have that $ [x_{i},x_{i+1}] \cap [x_{j},x_{j+1}] \cap [x_{k},x_{k+1}] = \varnothing $. 
\end{enumerate}
\end{lemma}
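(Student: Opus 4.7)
The plan is to apply Proposition~\ref{recombination}(\ref{recombination 2}) in both parts, after orienting each rim edge so that its odd-indexed endpoint comes first and its even-indexed endpoint comes second. The key observation is that each rim edge of the $n$-sun has exactly one odd and one even endpoint, and that an odd-even pair $(x_{o}, x_{e})$ of the sun is an edge if and only if $o - e \equiv \pm 1 \pmod{2n}$; in particular, odd-odd pairs are never edges.

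For part~(\ref{UBG sun 1}), I would invoke Proposition~\ref{recombination}(\ref{recombination 2}) with $\sigma$ the transposition. The two swapped pairs are odd-even, so each one fails to be an edge exactly when its two indices do not differ by $\pm 1$ modulo $2n$. A short index check, split according to whether $i$ and $j$ share a parity or have opposite parities, shows that the non-adjacency hypothesis $j \neq i, i \pm 1$ suffices on its own in the opposite-parity cases, while the extra hypothesis $j \neq i \pm 2$ is exactly what is needed in the same-parity cases (where the would-be bad pair is precisely an $o$-versus-$(o \pm 1)$ configuration of the sun).

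For part~(\ref{UBG sun 2}), the idea is to reduce to part~(\ref{UBG sun 1}) whenever possible. If some two of the three indices $i, j, k$ differ by something other than $\pm 2 \pmod{2n}$, then the corresponding pairwise intersection is already empty by part~(\ref{UBG sun 1}), so the triple intersection is empty as well. Otherwise, writing $a = j - i$, $b = k - j$, $c = i - k$, we have $a + b + c \equiv 0 \pmod{2n}$ with $a, b, c \in \{\pm 2\}$, which forces $2n \mid 6$ and hence $n = 3$. In this exceptional case the three non-adjacent rim edges form, up to rotating indices, the perfect matching $\{x_1, x_2\}, \{x_3, x_4\}, \{x_5, x_6\}$ of the rim $6$-cycle; orienting each as (odd, even) and applying Proposition~\ref{recombination}(\ref{recombination 2}) with the $3$-cycle $\sigma(m) = m + 1 \bmod 3$ yields the three swapped pairs $\{x_1, x_4\}, \{x_3, x_6\}, \{x_5, x_2\}$, whose index differences are all $\equiv 3 \pmod{6}$ and so none is an edge of the $3$-sun.

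The main obstacle is the parity bookkeeping in part~(\ref{UBG sun 1}) --- in particular, verifying that the hypothesis $j \neq i \pm 2$ is tight and exactly matches the forbidden configurations arising after the swap --- together with the need to treat the small $n = 3$ case of part~(\ref{UBG sun 2}) separately by exhibiting a concrete derangement, since part~(\ref{UBG sun 1}) does not quite cover it.
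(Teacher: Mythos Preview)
Your argument is correct. For part~(\ref{UBG sun 1}) it is essentially the paper's proof: the paper also splits on the parity of $i-j$ and applies Proposition~\ref{recombination}(\ref{recombination 2}) with the transposition, and your uniform ``odd-first, even-second'' orientation of each rim edge is just a tidy repackaging of the same two cases. For part~(\ref{UBG sun 2}), however, your route genuinely differs from the paper's. The paper does \emph{not} reduce to part~(\ref{UBG sun 1}); instead it orients each of the three edges as $(p_{s},q_{s})=(\text{odd},\text{even})$, exactly as you do, and proves directly that at least one of the two $3$-cycles always works as the permutation $\sigma$: if the forward cycle fails at, say, $\{p_{i},q_{j}\}$, then $q_{i},p_{i},q_{j},p_{j}$ must occupy four consecutive positions on the rim, and from this (together with $2n\geq 6$ and the pairwise non-adjacency of the three edges) one reads off that all three pairs for the backward cycle are non-edges. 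The paper's argument has the advantage of being uniform in $n$, with no residual small case; your argument has the advantage of making transparent that the hypothesis $j\neq i\pm 2$ in part~(\ref{UBG sun 1}) is exactly the obstruction to a pairwise conclusion, and the leftover $n=3$ case is disposed of with a single explicit $3$-cycle.
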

\begin{proof}
(\ref{UBG sun 1}) 
Recall that an even-indexed vertex and an odd-indexed vertex in the $ n $-sun are adjacent if and only if their indices are consecutive. 
If $ i \not\equiv j \pmod{2} $, then neither $ \{x_{i}, x_{j}\} $ nor $ \{x_{i+1}, x_{j+1} \} $ is an edge. 
Otherwise, neither $ \{x_{i}, x_{j+1}\} $ nor $ \{x_{j}, x_{i+1}\} $ is an edge since $ j \neq i \pm 2 $. 
In any cases, we may conclude that the assertion is true by Proposition \ref{recombination} (\ref{recombination 2}). 

(\ref{UBG sun 2}) 
For each $ s \in \{i,j,k\} $, let $ p_{s} $ and $ q_{s} $ be the odd-indexed and even-indexed vertices of $ \{x_{s}, x_{s+1}\} $. 
We will show that neither $ \{p_{i}, q_{j}\}, \{p_{j}, q_{k}\} $, nor $ \{p_{k}, q_{i}\} $ is an edge or neither $ \{p_{i}, q_{k}\}, \{p_{k}, q_{j}\} $, nor $ \{p_{j}, q_{i}\} $ is an edge.  
Suppose that the former does not hold. 
Without loss of generality we may assume that $ \{p_{i},q_{j}\} $ is an edge. 
Then the indices of the vertices $ q_{i}, p_{i}, q_{j}, p_{j} $ are consecutive in this order. 
Since the cardinality of the vertex set is at least $ 6 $, we have $ \{p_{j}, q_{i}\} $ is not an edge. 
Moreover, neither $ \{p_{i}, q_{k}\} $ nor $ \{p_{k}, q_{j}\} $ is an edge. 
Thus the latter condition holds. 
Using Proposition \ref{recombination} (\ref{recombination 2}), we have proved the assertion. 
\end{proof}

\begin{proof}[Proof of Theorem \ref{main R-tree} (\ref{main R-tree 3}) $ \Rightarrow $ (\ref{main R-tree 1})]
We will show that every unit ball graph on an $ \mathbb{R} $-tree is chordal and sun-free. 
Assume that there exists $ n \geq 4 $ such that $ C_{n} \in \UBG(X) $. 
Let $ \{x_{i}\}_{i \in \mathbb{Z}/n\mathbb{Z}} $ be the vertices of $ C_{n} $ with $ \{x_{i}, x_{i+1}\} \in E_{C_{n}} $. 
Then $ A \coloneqq [x_{1}, x_{2}] $ and $ B \coloneqq \bigcup_{i=3}^{n-1}[x_{i},x_{i+1}] $ are disjoint closed connected subsets by Lemma \ref{UBG cycle}. 
By Proposition \ref{bridge}, both of the segments $ [x_{1},x_{n}] $ and $ [x_{2},x_{3}] $ contains the bridge between $ A $ and $ B $. 
In particular, we have $ [x_{1},x_{n}] \cap [x_{2},x_{3}] \neq \varnothing $, which contradicts to Lemma \ref{UBG cycle}. 
Thus we conclude that every unit ball graph on $ X $ is chordal. 

Next, suppose that an $ n $-sun is a unit ball graph on $ X $. 
Let $ \{x_{i}\}_{i \in \mathbb{Z}/2n\mathbb{Z}} $ be the vertex set of the $ n $-sun with the conditions mentioned in Lemma \ref{UBG sun}. 
Let $ A \coloneqq [x_{1}, x_{2}] $ and $ B \coloneqq \bigcup_{i=4}^{2n-2}[x_{i},x_{i+1}] $. 
By Lemma \ref{UBG sun} (\ref{UBG sun 1}), we have $ A $ and $ B $ are disjoint closed connected subsets of $ X $. 
Let $ [a,b] $ be the bridge between $ A $ and $ B $ and set $ Y \coloneqq A \cup [a,b] \cup B $. 
Let $ y_{3} $ and $ y_{2n} $ be the closest points in $ Y $ to $ x_{3} $ and $ x_{2n} $. 

We will show that $ y_{3}, y_{2n} \in A \cup B $. 
Assume that $ y_{3} \not\in A \cup B $. 
Then we have $ y_{3} \in [a,b] \subseteq [x_{1}, x_{2n-1}] \subseteq [x_{2n}, x_{1}] \cup [x_{2n-1}, x_{2n}] $ by Proposition \ref{bridge} and \ref{R-tree equivalent conditions} (\ref{R-tree equivalent conditions 4}). 
Furthermore, by Proposition \ref{bridge} again, we have $ y_{3} \in [x_{3}, x_{4}] \cap [x_{2}, x_{3}] $. 
Therefore $ y_{3} \in ([x_{2n},x_{1}] \cap [x_{3},x_{4}]) \cup ([x_{2n-1},x_{2n}] \cap [x_{2},x_{3}]) $. 
However, by Lemma \ref{UBG sun} (\ref{UBG sun 1}), we have $ [x_{2n},x_{1}] \cap [x_{3},x_{4}] = [x_{2n-1},x_{2n}] \cap [x_{2},x_{3}] = \varnothing $, a contradiction. 
Hence $ y_{3} \in A \cup B $. 
We can show that $ y_{2n} \in A \cup B $ in a similar way. 

Assume that $ y_{3} \in A $ and $ y_{2n} \in B $. 
Then $ [x_{3},x_{4}] \cap [x_{2n},x_{1}] \supseteq [a,b] $, which contradicts to Lemma \ref{UBG sun} (\ref{UBG sun 1}). 
The condition $ y_{3} \in B $ and $ y_{2n} \in A $ also leads to a contradiction. 
If $ y_{3}, y_{2n} \in A $, then $ [x_{3}, x_{4}] \cap [x_{2n},x_{2n-1}] \supseteq [a,b] $. 
Therefore $ A \cap [x_{3}, x_{4}] \cap [x_{2n},x_{2n-1}] \ni a $. 
This is a contradiction to Lemma \ref{UBG sun} (\ref{UBG sun 2}). 
Finally assume that $ y_{3},y_{2n} \in B $. 
Then we have $ B \cap [x_{3},x_{2}] \cap [x_{2n}, x_{1}] \ni b $, which is again a contradiction to Lemma \ref{UBG sun} (\ref{UBG sun 2}).
Therefore we conclude that every unit ball graph on $ X $ is sun-free.  
Thus the proof has been completed. 
\end{proof}

\section{$ (\text{claw},\text{net}) $-free graphs and $ 1 $-dimensional manifolds}\label{Sec:1-manifold}
In this section, we will prove Theorem \ref{main 1-manifold}. 
\begin{theorem}[Restate of Theorem \ref{main 1-manifold}]
Let $ (X,d) $ be a geodesic space. 
Then the following are equivalent: 
\begin{enumerate}[(1)]
\item Every unit ball graph on $ X $ is (claw, net)-free. 
\item $ X $ has no tripod. 
\item $ X $ is homeomorphic to a manifold of dimension at most $ 1 $, that is, $ X $ is similar to $ S^{1} $ or isometric to an interval, that is, a convex subset of $ \mathbb{R} $. 
\end{enumerate}
\end{theorem}
As mentioned Remark \ref{Hamiltonian path}, the implication $ (\ref{main 1-manifold 3}) \Rightarrow (\ref{main 1-manifold 1}) $ holds.

\subsection{Proof of Theorem \ref{main 1-manifold} $ (\ref{main 1-manifold 1}) \Rightarrow (\ref{main 1-manifold 2}) $}
\begin{lemma}\label{net-like graph}
Let $ G $ be a graph on vertex set $ \{a_{i}\}_{i=0}^{l} \cup \{b_{i}\}_{i=0}^{m} \cup \{c_{i}\}_{i=0}^{n} $ with positive integers $ l,m,n $ satisfying the following conditions. 
\begin{enumerate}[(i)]
\item\label{net-like graph 1} $ \{a_{i}\}_{i=0}^{l}, \{b_{i}\}_{i=0}^{m}, \{c_{i}\}_{i=0}^{n} $ induce chordless paths. 
\item\label{net-like graph 2} $ a_{0}, b_{0}, c_{0} $ are leaves, that is, they are vertices of degree $ 1 $. 
\item\label{net-like graph 3} $ \{a_{l}, b_{m}, c_{n}\} $ induces a triangle. 
\end{enumerate}
Then $ G $ has an induced subgraph isomorphic to a claw or a net. 
\end{lemma}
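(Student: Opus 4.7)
The plan is to induct on $l+m+n$. For the base case $l=m=n=1$, the leaf conditions on $a_0, b_0, c_0$, the chordless-path condition on each arm, and the prescribed triangle together forbid any additional edge, so $G$ coincides with the six-vertex net on $\{a_0, a_1, b_0, b_1, c_0, c_1\}$.

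For the inductive step, suppose $l+m+n \geq 4$, and by the symmetry of the hypotheses in $l, m, n$, assume $l \geq 2$. The first bifurcation is whether $a_{l-1}$ has any neighbor in $\{b_1,\dots,b_m\}\cup\{c_1,\dots,c_n\}$. If not, then $a_{l-1}$ is a leaf in the induced subgraph $G' = G[\{a_{l-1}, a_l\} \cup \{b_i\}_{i=0}^{m} \cup \{c_i\}_{i=0}^{n}]$, which satisfies the lemma's hypotheses with parameters $(1, m, n)$. Since $1+m+n < l+m+n$, the induction hypothesis produces the desired induced subgraph in $G'$, and hence in $G$.

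If instead $a_{l-1}$ has an external neighbor, WLOG $a_{l-1} \sim b_j$ for some $1 \leq j \leq m$, I would either find a smaller spanning triangle $\{a_{l'}, b_{m'}, c_{n'}\}$ with $l'+m'+n' < l+m+n$ (in which case the restriction of $G$ to indices $\leq (l', m', n')$ is a smaller valid configuration and induction finishes) or exhibit a claw directly. When $1 \leq j < m$, the path-neighbors $b_{j-1}, b_{j+1}$ are non-adjacent by the chordless-path condition on the $b$-arm, and when no smaller spanning triangle is available one shows that $\{b_{j-1}, b_{j+1}, a_{l-1}\}$ is an independent set of neighbors of $b_j$, yielding a claw centered at $b_j$. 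When $j = m$, the triangle $\{a_{l-1}, a_l, b_m\}$ is automatic, and the reduction proceeds either via the additional edge $a_{l-1} \sim c_n$ (whereupon deleting $a_l$ gives a valid $(l-1, m, n)$-configuration) or, failing that, by a direct examination of the neighborhood of $b_m$---in particular the triple $\{a_{l-1}, c_n, b_{m-1}\}$, which furnishes a claw when no reducing edge is present.

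The main obstacle is the bookkeeping in this last step: many patterns of external edges from $a_{l-1}$ (or, by symmetry, from $b_{m-1}, c_{n-1}$) must be analyzed, and for each pattern one must verify that either some smaller spanning triangle appears---allowing a reduction---or that three pairwise non-adjacent vertices are present in the neighborhood of some triangle or path vertex. The chordless-path condition on each arm is the recurring tool, as it forbids unwanted adjacencies among path-internal vertices and so secures the independence of candidate claw-leaves drawn from a single arm.
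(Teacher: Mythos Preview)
Your overall plan---induct on the total arm length and split on whether a designated inner vertex has neighbors outside its own arm---is exactly the paper's strategy. The paper examines $a_{1}$ (next to the leaf) rather than your $a_{l-1}$ (next to the triangle), but that is cosmetic. The reduction when your designated vertex has no external neighbor, and the $j=m$ analysis culminating in the claw $\{a_{l-1},c_{n},b_{m-1}\}$ around $b_{m}$, line up with the paper's treatment.

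There is, however, a genuine gap in the $1\le j<m$ case. You assert that ``when no smaller spanning triangle $\{a_{l'},b_{m'},c_{n'}\}$ is available, $\{b_{j-1},b_{j+1},a_{l-1}\}$ is independent,'' but this dichotomy is false as stated. If you take $j$ minimal so that $a_{l-1}\not\sim b_{j-1}$, the obstruction is $a_{l-1}\sim b_{j+1}$: this produces the triangle $\{a_{l-1},b_{j},b_{j+1}\}$, which touches only the $a$- and $b$-arms, so no triangle of the form $\{a_{l'},b_{m'},c_{n'}\}$ appears and your ``restrict to indices $\le(l',m',n')$'' reduction does not apply. The missing idea is that the third arm of the smaller configuration need not come from the original $c$-arm alone: the paper manufactures it as a shortest path from $b_{j+1}$ to $c_{0}$ inside $\{b_{j+1},\dots,b_{m},c_{n},\dots,c_{0}\}$, so the new configuration has arms $\{a_{0},\dots,a_{l-1}\}$ (or $\{a_0,a_1\}$ in the paper's indexing), $\{b_{0},\dots,b_{j}\}$, and this mixed path, with triangle $\{a_{l-1},b_{j},b_{j+1}\}$. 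The same device is what makes the ``reducing edge'' $b_{m-1}\sim c_{n}$ in your $j=m$ case actually reduce: the new triangle is $\{b_{m-1},b_{m},c_{n}\}$, and one of the new arms must be rebuilt (in the paper, as $\{a_{0},a_{1},b_{m}\}$) rather than obtained by truncating indices. Once you incorporate this shortest-path construction for the third arm, your argument goes through and coincides with the paper's.
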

\begin{proof}
We proceed by induction of the number of the vertices of $ G $. 
First suppose that $ l=m=n=1 $. 
Then by (\ref{net-like graph 1}) and (\ref{net-like graph 3}) 
\begin{align*}
E_{G} \supseteq \Set{\{a_{0},a_{1}\}, \{b_{0}, b_{1}\}, \{c_{0}, c_{1}\}, \{a_{1},b_{1}\}, \{b_{1},c_{1}\}, \{c_{1},a_{1}\}} \eqqcolon S. 
\end{align*}
Assume that there exists an edge $ e \in E_{G}\setminus S $. 
Then one of $ a_{0}, b_{0}, $ and $ c_{0} $ is incident to $ e $, which is a contradiction to (\ref{net-like graph 2}). 
Therefore $ E_{G}=S $ and $ G $ itself is a net. 
Hence we may assume that $ l \geq 2 $ by symmetry. 

If the neighborhood of $ a_{1} $ is $ \{a_{0}, a_{2}\} $, then $ G\setminus \{a_{0}\} $ satisfies the assumptions. 
Therefore, by the induction hypothesis, $ G\setminus\{a_{0}\} $ and hence $ G $ have a claw or a net. 
Without loss of generality, we may assume that there exists the minimum integer $ i $ such that $ \{a_{1}, b_{i}\} $ is an edge of $ G $. 
By assumption (\ref{net-like graph 2}) we have $ 1 \leq i \leq m $. 

Assume that $ i < m $. 
If $ \{a_{1}, b_{i+1}\} $ is not an edge of $ G $, then the four vertices $ a_{1}, b_{i-1}, b_{i}, b_{i+1} $ form a claw by the minimality of $ i $ and the assumption (\ref{net-like graph 1}). 
Now suppose that $ \{a_{1}, b_{i+1}\} $ is an edge. 
Note that $ \{a_{1}, b_{i}, b_{i+1}\} $ induces a triangle. 
Take a shortest path from $ b_{i+1} $ to $ c_{0} $ in the induced subgraph on $ \{b_{i+1}, \dots, b_{m}, c_{n}, \dots, c_{0}\} $. 
This path together with two paths on $ \{a_{0},a_{1}\} $ and $ \{b_{0}, \dots, b_{i}\} $ induce a subgraph of $ G $ satisfying the assumptions. 
This subgraph is a proper subgraph of $ G $ since this does not contain the vertex $ a_{l} $. 
Making use of the induction hypothesis, we have that $ G $ has a claw or a net. 
Hence we may assume that $ i=m $. 

If $ \{a_{1}, c_{n}\} $ is an edge of $ G $, then the graph $ G \setminus \{a_{2}, \dots, a_{l} \} $ satisfies the assumptions with the triangle $ \{a_{1}, b_{m}, c_{n}\} $. 
Therefore we may assume that $ \{a_{1}, c_{n}\} $ is not an edge of $ G $.
If $ \{b_{m-1}, c_{n}\} $ is an edge of $ G $, then the paths on $ \{a_{0}, a_{1}, b_{m}\}, \{b_{0}, \dots, b_{m-1}\}, \{c_{0}, \dots, c_{n}\} $ induce a subgraph satisfying the assumptions with the triangle $ \{b_{m}, b_{m-1}, c_{n}\} $. 
Hence we may assume that $ \{b_{m-1}, c_{n}\} $ is not an edge of $ G $. 
Then the induced subgraph on $ \{b_{m}, a_{1}, b_{m-1}, c_{n}\} $ is a claw. 
\end{proof}

\begin{proof}[Proof of Theorem \ref{main 1-manifold} $ (\ref{main 1-manifold 1}) \Rightarrow (\ref{main 1-manifold 2}) $]
Assume that there exist four points $ x_{1},x_{2},x_{3},y \in X $ forming a tripod with center $ y $. 
By Proposition \ref{separation}, there exists $ r > 0 $ such that 
\begin{align*}
d(\{ x_{i} \}, [x_{i+1},y] \cup [x_{i+2},y]) > 2r \text{ for every } i \in \mathbb{Z}/3\mathbb{Z}. 
\end{align*}
Let $ \gamma $ be a geodesic from $ x_{1} $ to $ y $ and $ l $ the greatest integer less than or equal to $ d(x_{1},y) $. 
Define a sequence $ \{a_{i}\}_{i=0}^{l} $ by $ a_{i} \coloneqq \gamma(ir) $. 
Note that $ U_{r}(a_{i}) \cap U_{r}(a_{j}) \neq \varnothing $ if and only if $ |i-j| \leq 1 $. 
Define sequences $ \{b_{i}\}_{i=0}^{m} $ and $ \{c_{i}\}_{i=0}^{n} $ with respect to $ x_{2} $ and $ x_{3} $ in a similar way. 
By the choice of $ r $, we have $ U_{r}(a_{0}) \cap U_{r}(b_{i}) = \varnothing,  \ U_{r}(a_{0}) \cap U_{r}(c_{i}) = \varnothing $, and so on. 
Moreover we have $ U_{r}(a_{l}) \cap U_{r}(b_{m}) \cap U_{r}(c_{n}) \ni y $. 
Therefore the intersection graph of open balls of radius $ r $ with center points in $ \{a_{i}\}_{i=0}^{l} \cup \{b_{i}\}_{i=0}^{m} \cup \{c_{i}\}_{i=0}^{n} $ satisfies the assumptions of Lemma \ref{net-like graph} and hence it has a claw or a net as an induced subgraph. 
\end{proof}

\subsection{Proof of Theorem \ref{main 1-manifold} $ (\ref{main 1-manifold 2}) \Rightarrow (\ref{main 1-manifold 3}) $}
\begin{lemma}\label{no tripod lemma}
Let $ (X,d) $ be a geodesic space with no tripods. 
Then the following conditions hold. 
\begin{enumerate}[(1)]
\item\label{no tripod lemma 1} Suppose that $ x,y,z \in X $ satisfy $ [x,z] \cap [y,z] \supsetneq \{z\} $. 
Then $ x \in [y,z] $ or $ y \in [x,z] $. 
In particular, if $ X $ is uniquely geodesic, then $ [x,z] \subseteq [y,z] $ or $ [y,z] \subseteq [x,z] $. 
\item\label{no tripod lemma 2} Let $ x,y,z $ be distinct points with $ y \in [x,z] $. 
Then the geodesic segment between $ x $ and $ y $ is unique and hence it is a subsegment of $ [x,z] $. 
\end{enumerate}
\end{lemma}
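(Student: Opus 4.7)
For part (1), I will argue by contradiction: assume $[x,z] \cap [y,z] \supsetneq \{z\}$ but neither $x \in [y,z]$ nor $y \in [x,z]$, and exhibit a tripod to contradict the no-tripod hypothesis. First I observe that $x, y, z$ must be distinct, since the degenerate cases $x=z$, $y=z$, $x=y$ each contradict one of the hypotheses. Next I parameterize $\alpha \colon [0, d(x,z)] \to X$ from $x$ to $z$ and set
\[
S_\alpha = \{s \in [0, d(x,z)] : \alpha(s) \in [y,z]\}.
\]
This set is closed, contains $d(x,z)$, strictly so by hypothesis, and excludes $0$ because $x \notin [y,z]$. Let $s^* := \inf S_\alpha$; then $0 < s^* < d(x,z)$, and $s^* \in S_\alpha$ by closedness. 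Set $p = \alpha(s^*)$, and write $p = \beta(u^*)$ for some $u^* \in (0, d(y,z))$, where $\beta$ parameterizes $[y,z]$ starting at $y$. The candidate tripod will have center $p$ and arms
\[
A = \alpha([0, s^*]), \quad B = \beta([0, u^*]), \quad C = \beta([u^*, d(y,z)])
\]
running from $p$ to $x$, $y$, $z$ respectively. Injectivity of $\beta$ yields $B \cap C = \{p\}$ immediately, while both $A \cap B$ and $A \cap C$ reduce to showing that $S_\alpha \cap [0, s^*] = \{s^*\}$, which is precisely the minimality of $s^*$. This produces the tripod, giving the contradiction and proving the membership assertion. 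The ``in particular'' clause then follows because in a uniquely geodesic space the unique geodesic from $x$ to $z$ must be the subsegment of $[y,z]$ joining $x$ to $z$.

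For part (2), suppose $\gamma_2$ is a geodesic from $x$ to $y$ distinct from $\gamma_1 := \alpha|_{[0, d(x,y)]}$, and aim again for a tripod. A key preliminary observation is that any point common to two arc-length geodesics starting at $x$ must lie at the same parameter in each, since that parameter equals the distance from $x$. Consequently, $\gamma_1 \subseteq \gamma_2$ would force $\gamma_1 = \gamma_2$, so I may pick $w = \alpha(s_1) \in \gamma_1 \setminus \gamma_2$ with $s_1 \in (0, d(x,y))$. The proposed tripod has center $y$ and arms $\gamma_2$ (to $x$), $\alpha([d(x,y), d(x,z)])$ (to $z$), and $\alpha([s_1, d(x,y)])$ (to $w$); the four endpoints $x,y,z,w$ are distinct by the hypotheses and the injectivity of $\alpha$. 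The intersection of the two $\alpha$-subsegments is immediately $\{y\}$. For $\gamma_2 \cap \alpha([d(x,y), d(x,z)])$ and $\gamma_2 \cap \alpha([s_1, d(x,y)])$, I argue by contrapositive: if either properly contained $\{y\}$, part (1) applied to that pair of segments (sharing endpoint $y$) would force one of $x, z, w$ to lie on the other segment, and each such conclusion is ruled out --- by injectivity of $\alpha$ for $x$, by $d(x,z) > d(x,y)$ for $z$, and by the choice $w \notin \gamma_2$ for $w$. Hence both intersections are exactly $\{y\}$, the tripod exists, and we contradict the hypothesis, so $\gamma_2 = \gamma_1$ and the unique $[x,y]$ is a subsegment of $[x,z]$.

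The main obstacle in (1) is identifying the correct extremal branch point: once $s^* = \inf S_\alpha$ is chosen, the tripod arms and the intersection checks fall out mechanically. In (2) the delicate point is that part (1) must be applied to two segments sharing the common endpoint $y$, and one has to set up both applications so that both possible conclusions of part (1) are individually excluded by the data in hand, thereby forcing the relevant intersections to be trivial and the tripod to materialize.
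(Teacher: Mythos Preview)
Your argument is correct. Part~(1) is essentially the paper's proof: both select the point of $[x,z]\cap[y,z]$ closest to $x$ (you via $\inf S_\alpha$, the paper via compactness of $E=[x,z]\cap[y,z]$) and use the three evident subsegments as tripod arms. In part~(2) you diverge slightly. The paper concatenates the alternative geodesic $[x,y]'$ with the subsegment $[y,z]\subseteq[x,z]$ to produce a second geodesic $[x,z]'$ from $x$ to $z$, then applies (1) \emph{once} to the pair $[p,z],\,[x,z]'$ (common endpoint $z$, with $[y,z]$ in their intersection), forcing $p\in[x,z]'$ or $x\in[p,z]$, both impossible. You instead assemble a tripod at $y$ directly and invoke (1) twice to certify two of the three intersection conditions. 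The paper's route is a touch slicker, hiding the tripod inside a single call to (1), but it silently uses that a length-additive concatenation of geodesics is again a geodesic; your route sidesteps that at the cost of one extra application of (1). Both are valid.
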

\begin{proof}
(\ref{no tripod lemma 1}) 
Assume that $ x \not\in [y,z] $ and $ y \not\in [x,z] $. 
We will prove that $ X $ has a tripod. 
Let $ E \coloneqq [x,z] \cap [y,z] $ and $ \gamma $ the geodesic corresponding to $ [x,z] $. 
Since $ E $ is compact, there exists a point $ q \in E $ such that $ d(x,q) = \min_{p \in E}d(x,p) $.  
Note that $ q \neq x,y,z $ since $ x,y \not\in E $ and $ E \supsetneq \{z\} $. 
Take a geodesic segments $ [x,q] $ from $ [x,z] $ and two segments $ [y,q], [z,q] $ from $ [y,z] $ (Note that a geodesic segment joining two points is not necessarily unique). 
Then we may conclude that $ [x,q], [y,q] $ and $ [z,q] $ form a tripod by the choice of $ q $. 

(\ref{no tripod lemma 2})
Let $ [x,y], [y,z] $ denote the geodesic segments in $ [x,z] $ and assume that there exists another geodesic segment $ [x,y]^{\prime} $ between $ x $ and $ y $. 
If $ [x,y] \subseteq [x,y]^{\prime} $, then $ [x,y] = [x,y]^{\prime} $, which is a contradiction. 
Hence there exists $ p \in [x,y] \setminus [x,y]^{\prime} $. 
Let $ [p,z] $ be the geodesic segment in $ [x,z] $ and $ [x,z]^{\prime} $ the geodesic segment obtained by connecting $ [x,y]^{\prime} $ and $ [y,z] $. 
Then $ [p,z] \cap [x,z]^{\prime} \supseteq [y,z] \supsetneq \{z\} $. 
Hence, by (\ref{no tripod lemma 1}), we have $ p \in [x,z]^{\prime} $ or $ x \in [p,z] $. 
Each case yields a contradiction. 
\end{proof}

\begin{proof}[Proof of Theorem \ref{main 1-manifold} $ (\ref{main 1-manifold 2}) \Rightarrow (\ref{main 1-manifold 3}) $]
First we assume that $ X $ is an $ \mathbb{R} $-tree and construct a distance-preserving map from $ X $ to $ \mathbb{R} $. 
We may assume that $ X $ is not the single-point space. 
Take two distinct points $ q_{+}, q_{-} \in X $ and let $ q_{0} $ be the midpoint between $ q_{+} $ and $ q_{-} $. 

Next we show that $ [x, q_{0}] \supseteq [q_{-}, q_{0}], x \in [q_{-}, q_{+}]$,  or $[q_{0},q_{+}] \subseteq [q_{0}, x] $ for each $ x \in X $. 
In order to prove this statement,  consider geodesic segments $ [q_{-},q_{+}] $ and $ [q_{-}, x] $. 
If $ [q_{-},q_{+}] \cap [q_{-}, x] = \{q_{-}\} $, then $ [x, q_{+}] = [x, q_{-}] \cup [q_{-}, q_{+}] \supseteq [q_{-}, q_{+}] $ by Proposition \ref{R-tree equivalent conditions} and hence $ [x, q_{0}] \supseteq [q_{-}, q_{0}] $. 
When $ [q_{-},q_{+}] \cap [q_{-}, x] \supsetneq \{q_{-}\} $, we have $ x \in [q_{-}, q_{+}] $ or $ [q_{-},q_{+}] \subseteq [q_{-}, x] $ by Lemma \ref{no tripod lemma}. 
The latter condition implies $ [q_{0},q_{+}] \subseteq [q_{0}, x] $. 

Define subspaces $ X_{+} $ and $ X_{-} $ by 
\begin{align*}
X_{+} &\coloneqq \Set{x \in X | d(x,q_{-}) > d(x,q_{+})}, \\
X_{-} &\coloneqq \Set{x \in X | d(x,q_{-}) < d(x,q_{+})}. 
\end{align*}
Note that for every $ x \in X \setminus \{q_{0}\} $ we have that $ x \in X_{+} $ if and only if $ x \in [q_{0}, q_{+}] $ or $ [q_{0},q_{+}] \subseteq [q_{0}, x] $, and also we have that $ x \in X_{-} $ if and only if $ x \in [q_{-}, q_{0}] $ or $ [x,q_{0}] \supseteq [q_{-}, q_{0}] $. 

Suppose that $ x \in X \setminus (X_{+} \cup X_{-}) $, that is, $ d(x,q_{-}) = d(x,q_{+}) $. 
Then $ x \in [q_{-},q_{+}] $ and hence $ x = q_{0} $. 
Thus we obtain the decomposition $ X = X_{-} \cup \{q_{0}\} \cup X_{+} $. 
Define a map $ \phi \colon X \to \mathbb{R} $ by 
\begin{align*}
\phi(x) \coloneqq 
\begin{cases}
0 & \text{ if } x = q_{0}, \\
d(q_{0},x) & \text{ if } x \in X_{+}, \\
-d(q_{0},x) & \text{ if } x \in X_{-}. 
\end{cases}
\end{align*}

Now we will show that $ \phi $ preserves the distance. 
We will treat only the case where $ x, y \in X_{+} $ since the other cases is clear or similar and show that one of $ [q_{0},x] $ and $ [q_{0}, y] $ contains the other. 
If $ x \in [q_{0}, q_{+}] $ or $ y \in [q_{0}, q_{+}] $, then it is clear. 
Thus we may assume that $ [q_{0},q_{+}] \subseteq [q_{0},x] $ and $ [q_{0},q_{+}] \subseteq [q_{0},y] $. 
By Lemma \ref{no tripod lemma} (\ref{no tripod lemma 1}) we have $ [q_{0},y] \subseteq [q_{0},x] $ or $ [q_{0},y] \supseteq [q_{0},x] $. 
Therefore we may conclude that $ d(x,y) = |d(q_{0},x) - d(q_{0},y)| = |\phi(x) - \phi(y)| $. 
Hence $ X $ is isometric to an interval. 

Second we assume that $ X $ is not an $ \mathbb{R} $-tree and show that $ X $ is similar to $ S^{1} $. 
By Proposition \ref{R-tree equivalent conditions}, there exist three distinct points $ x,y,z \in X $ and segments $ [x,y],[x,z],[y,z] $ such that $ [x,y] \cap [y,z] = \{y\} $ and $ [x,z] \neq [x,y] \cup [y,z] $. 
We show that $ [x,z] \cap [y,z] = \{z\} $ and $ [x,y] \cap [x,z] = \{x\} $. 
To prove the former, assume $ [x,z] \cap [y,z] \supsetneq \{z\} $. 
Lemma \ref{no tripod lemma} (\ref{no tripod lemma 1}) asserts that $ x \in [y,z] $ or $ y \in [x,z] $. 
If $ x \in [y,z] $, then $ x \in [x,y] \cap [y,z] = \{y\} $. 
Hence $ x = y $, which is a contradiction. 
When $ y \in [x,z] $, apply Lemma \ref{no tripod lemma} (\ref{no tripod lemma 2}), we obtain $ [x,y] $ and $ [y,z] $ are subsegments of $ [x,z] $ and hence $ [x,z] = [x,y] \cup [y,z] $, which is again a contradiction. 
The latter case can be proved by symmetry. 

If $ X = [x,y] \cup [x,z] \cup [y,z] $, then it is easy to prove that $ X $ is similar to $ S^{1} $. 
Assume that there exists a point $ p \in X \setminus [x,y] \cup [x,z] \cup [y,z] $. 
Take a geodesic segment $ [p,z] $ and consider it together with the geodesic segments $ [x,z], [y,z] $. 
Since $ X $ has no tripod, we have $ [x,z] \cap [p,z] \supsetneq \{z\} $ or $ [y,z] \cap [p,z] \supsetneq \{z\} $. 
Without loss of generality we may assume that the latter condition holds. 
Since $ p \not\in [y,z] $, we have $ y \in [p,z] $ by Lemma \ref{no tripod lemma} (\ref{no tripod lemma 1}). 
The segments $ [y,z] $ is a subsegment of $ [p,z] $ by Lemma \ref{no tripod lemma} (\ref{no tripod lemma 2}). 
Take the subsegment $ [p,y] \subsetneq [p,z] $. 
Note that $ [p,y] \cap [y,z] = \{y\} $. 
We may deduce that $ x \in [p,y] $ in a similar way. 
Moreover, take the subsegment $ [p,x] \subsetneq [p,y] $ and we may show that $ z \in [p,x] $, which is a contradiction. 
Therefore we can conclude that $ X = [x,y] \cup [x,z] \cup [y,z] $ and it is similar to $ S^{1} $. 
\end{proof}

\section*{Acknowledgments}

The authors would like to thank the reviewers for valuable suggestions and comments, which were very helpful in making this paper more readable.

\bibliographystyle{amsplain}
\bibliography{bibfile}

\vspace{5mm}
\begin{table}[h]
\centering
\begin{tabular}{lll}
Masamichi Kuroda & \hspace*{10mm}& Shuhei Tsujie \\
Faculty of Engineering & & Department of Education \\
Nippon Bunri University & & Hokkaido University of Education \\
Oita 870-0316 & & Hokkaido 070-8621\\
Japan & & Japan  \\
kurodamm@nbu.ac.jp & &  tsujie.shuhei@a.hokkyodai.ac.jp
\end{tabular}
\end{table}

\end{document}